\newtheorem{theorem}{Theorem}[section]
\newtheorem*{theorem*}{Theorem}
\newtheorem{lemma}[theorem]{Lemma}
\newtheorem{proposition}[theorem]{Proposition}
\newtheorem{corollary}[theorem]{Corollary}
\newcommand{\Sym}{{\mathfrak S}}
\renewcommand{\O}{{\mathcal O}}
\newcommand{\R}{{\mathcal R}}
\begin{document}

\title{Record-dependent measures \\on the symmetric groups}

\author{Alexander Gnedin\thanks{
School of Mathematical Sciences, Queen Mary University of London, Mile End Road, London E1 4NS,
UK; e-mail: a.gnedin@qmul.ac.uk}
 \and Vadim Gorin\thanks{Massachusetts Institute of Technology, 77 Massachusetts avenue, Cambridge, MA 02140, USA;
 Institute for Information Transmission Problems, Bolshoy Karetny 19, Moscow 127994, Russia;
 e-mail: vadicgor@gmail.com} }
\date{25.09.13}

\maketitle

\begin{abstract}
\noindent A probability measure  $P_n$ on the symmetric group ${\mathfrak S}_n$ is said to be
record-dependent if $P_n(\sigma)$  depends only on the set of records of a permutation $\sigma\in
{\mathfrak S}_n$. A sequence $P=(P_n)_{n\in{\mathbb N}}$ of consistent record-dependent measures
determines a random order on $\mathbb N$. In this paper we  describe the extreme elements of the
convex set of such $P$. This problem turns out to be related to the study of asymptotic behavior
of permutation-valued growth processes, to random extensions of partial orders, and to the
measures on the Young-Fibonacci lattice.

\end{abstract}


\section{Introduction}
\label{Section_Intro} Let $\Sym_n$ be  the group of permutations of  $[n]:=\{1,\dots,n\}$. A
position $j\in [n]$ is called an {\it upper record position} or simply a {\it record} in a
permutation $\sigma\in\Sym_n$ if $\sigma(j)=\max_{i\in [j]}\sigma(i)$. Let $R(\sigma)\subset [n]$
be the set of records of $\sigma$. A probability measure $P_n$ on ${\mathfrak S}_n$ is called {\it
record-dependent} (RD) if
 $P_n$ is conditionally uniform given the set of records,
or, equivalently, if the probability mass function $P_n(\sigma)$  depends only on $R(\sigma)$.

A natural way to connect permutations of different sizes is suggested by  viewing a generic
permutation $\sigma\in\Sym_n$  as a (total) order on $[n]$, in which $i$ precedes $j$ if $i$
appears in a lower position, that is $\sigma^{-1}(i)<\sigma^{-1}(j)$. Restricting the order to the
smaller set $[n-1]$ yields a projection $\pi_{n-1}^n:\Sym_n\to \Sym_{n-1}$, by which each permutation
$\sigma$ is mapped to a permutation which we call {\it coherent} with $\sigma$. Likewise, two
probability measures, $P_n$ on $\Sym_n$ and $P_{n-1}$ on $\Sym_{n-1}$, are said to be coherent if the
restriction sends $P_n$ to $P_{n-1}$. It turns out  that for coherent measures if $P_n$ is  RD,
so is $P_{n-1}$.

In this paper we are interested in  coherent sequences of RD-measures $P=(P_n)_{n\in{\mathbb N}}$.
Each such sequence defines a Markovian permutation growth process, with backward transition
probabilities like under the uniform distributions. On the other hand, a coherent sequence
uniquely determines a probability measure, denoted by the same symbol $P$, on a `$n=\infty$'
object, which is the space of orders on ${\mathbb N}$. The measure $P$ will be called an
RD-measure, meaning that the projection of $P$ to each $[n]$ is RD.
Our main results (Theorem \ref{theorem_extreme_points} and Proposition
\ref{prop_law_of_large_numbers}) explicitly characterize the extreme elements of the  convex set
of such RD-measures $P$. The characterization problem belongs to the circle of de Finetti-type
questions around sufficiency and stochastic symmetries \cite{Aldous, Olav}, and can be viewed in
 different contexts like the boundary problem for a branching scheme
\cite{Kerov_book} or processes on causal posets \cite{BL1,BL2}.

A straightforward example  of an RD-measure on $\Sym_n$ is
\begin{equation}\label{Ewens}
P_n(\sigma)=\frac{1}{Z}\, \theta^{|R(\sigma)|}, ~~~\sigma\in\Sym_n,
\end{equation}
where $\theta\in (0,\infty)$ and $Z=\theta(\theta+1)\cdots(\theta+n-1)$. For $\theta=1$ this is
the uniform distribution. By the `fundamental bijection',  which identifies the one-row notation
for writing a permutation with the cycle notation for another permutation, this measure is mapped
to the well-known Ewens distribution with weights $\frac{1}{Z}\theta^{|{\rm cycles}(\sigma)|}$
\cite{KerovTsi, DM}. However, unless $\theta=1$
the measures (\ref{Ewens}) are not coherent in the sense of the present paper. A qualitative
difference appears if we look how the number of records $|R(\sigma)|$ grows with $n$. For
(\ref{Ewens}) the order of growth is logarithmic, while for the measures studied here the right
scale  for $|R(\sigma)|$ is linear, with the case of the uniform distribution being the sole
exception.

The case of the uniform distribution is special among the RD measures. The uniform distribution on
$\Sym_n$ is important in statistics, as it appears by ranking samples from continuous
distributions. In this  connection various quantities, such as record times, record values,
interrecord times and others attracted lots of attention. We refer the reader to the review
\cite{Records_review} for  classical results on the theory of records.

In   the literature there is a number of other  permutation growth models build on  principles
similar to ours: the probability mass function depends, for each $n$, on a statistic $S$, while
coherence of random permutations of different sizes is defined via a system of projections. When
$S$ is the cycle type of a permutation,  the model can be embedded into Kingman's theory of
exchangeable partitions \cite{Kerov_book, CSP}.
When $S$ is the set of descents,  a coherent  sequence of random permutations  is associated with
a random order on $\mathbb N$ which has the property of spreadability \cite{GO}. Also Pitman's
partially exchangeable partitions \cite{PitmanPTRF} and random sequences of compositions of
integers \cite{KerovSubord, DM} can be recast in terms of  coherent random permutations.
Furthermore, certain parametric deformations of the uniform distribution fit in the framework,
with $S$ being a numerical statistic like the number of cycles, descents, pikes, inversions, etc
(see \cite{Mallows, DM} for examples and references).

Both the choice of a statistic $S$ and the choice of a system of projections connecting symmetric
groups affect properties of the permutation growth model. If we replace $|R(\sigma)|$ in
(\ref{Ewens}) by the number of {\it lower} records, the measures will become coherent under our
projections $\pi_{n-1}^n$. On the other hand, changing the system of projections one can achieve
the coherence of distributions (\ref{Ewens}). The alternative projections were considered in
\cite{DM} for  permutations with distribution depending on statistics of  both upper and lower
records.

 Comparing with the previous work, the main distinction of the present setting is in the
structure of the   set of extreme RD-measures $P$. In e.g.\ \cite{ DM, Kerov_book, KerovSubord,
PitmanPTRF} the extremes are described in terms of infinitely many continuous parameters, which
are asymptotic frequencies (on a linear scale) of certain components of a growing permutation. In
contrast to that, in our model the parametrization of the extreme RD-measures involves an integer
sequence and a real number.

Somewhat unexpectedly, a parametrization of extremes similar to ours has appeared  in the work of
Goodman and Kerov \cite{YF1}.
They studied random growth  processes on the Young-Fibonacci lattice, which
is an important example of a differential poset (as introduced
 by Stanley \cite{YF_St} and Fomin \cite{YF_F}), and proved a result  very much in line with our Theorem \ref{theorem_extreme_points}.
Although we see some further similarities with the setting of \cite{YF1},
the  connection remains obscure,
and it would be very interesting to  have more clarity in this point.

A prototypical instance of  differential poset is  Young's lattice. The study of   {coherent}
measures on Young's lattice, that is  growth processes with values in Young diagrams,  is a deeply
explored subject related, in particular, to the theory of group representations, total positivity
of matrices, and asymptotics of Schur symmetric functions,
see
\cite{Kerov_book, VK_Long} and references therein. In particular, the identification of the extreme
coherent measures
is equivalent to the classification of the characters and finite factor representations of the
infinite symmetric group ${\mathfrak S}_\infty$. Our results on the RD-measures can be also
interpreted  in this  spirit, as the classification of  traces on the $AF$--algebra associated
with the branching scheme (Bratteli diagram) of  permutations.

Finally, we  mention that some of the extreme RD-measures can be viewed in the context of
order-invariant measures on fixed causal sets, as introduced recently by Brightwell and Luczak
\cite{BL2}. We add details to this  aspect of our study in Section \ref{Section_connections}.

\section{Record-dependent measures and orders on $\mathbb N$}

We write permutations $\sigma\in\Sym_n$ in the one-row notation as permutation words
$\sigma(1)\dots\sigma(n)$. Note that position 1 is the smallest and $\sigma^{-1}(n)$ is  the
largest  element of the set of records $R(\sigma)$. For instance,
permutation $\sigma=2\,6\,5\,7\,1\,4\in\Sym_7$ has records $R(\sigma)=\{1,2,4\}$.

With $\sigma\in \Sym_n$ one associates an order on $[n]$, in which letter $i$ precedes $j$ if $i$
appears in a lower position, meaning that $\sigma^{-1}(i)<\sigma^{-1}(j)$. The restriction of the
order to $[n-1]$ yields a  projection $\pi^n_{n-1}:\Sym_n\to \Sym_{n-1}$,
which amounts to removing  letter $n$
from the permutation word. Explicitly,  for $i\in [n-1]$
$$
 \pi_{n-1}^n(\sigma)(i)=\begin{cases} \sigma(i), &{\text{if }} i<\sigma^{-1}(n),\\ \sigma(i+1),& {\text{if }}
 i\ge\sigma^{-1}(n).
                   \end{cases}
$$
For example, $\pi_3^4$ sends $3\,4\,1\,2$ to $3\,1\,2$. More generally,
the iterated  projection $\pi_m^n:
\Sym_n\to\Sym_m$ is defined for $1\leq m<n$ as the operation of deleting letters $m+1,\dots,n$
from the permutation word $\sigma(1)\dots\sigma(n)$.


Let $\O$ be the projective limit of the symmetric groups $\Sym_n$ taken together with projections
$\pi_{n-1}^n$. Thus, an element of $\O$  is a  coherent sequence $(\sigma_n)_{n\in{\mathbb N }}$, which
has  $\sigma_n\in\Sym_n$ and  $\sigma_{n-1}=\pi_{n-1}^n(\sigma_n)$   for $n>1$.
 Let $\pi^{\infty}_n$ denote the coordinate map sending
$(\sigma_1,\sigma_2,\dots)\in \O$ to $\sigma_n\in\Sym_n$. The coherence of permutations
immediately implies the following statement.

\begin{proposition}
 $\O$ is in bijection with the set of total orders on  ${\mathbb N}$. The projection
$\pi_n^\infty$ amounts to restricting the order from $\mathbb N$ to $[n]$.
\end{proposition}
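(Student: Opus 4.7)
The plan is straightforward because once one sees $\Sym_n$ correctly as the set of total orders on $[n]$, the proposition is nearly a tautology; the coherence condition on sequences of permutations is exactly what is needed to glue local orders into a global one.

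First I would make explicit the bijection $\Phi_n:\Sym_n\to\{\text{total orders on }[n]\}$ sending $\sigma$ to the order $\prec_\sigma$ defined by $i\prec_\sigma j\Leftrightarrow \sigma^{-1}(i)<\sigma^{-1}(j)$; its inverse takes an order on $[n]$ to the permutation word obtained by listing $[n]$ in that order. That this is a bijection is immediate, since a total order on a finite set is uniquely encoded by the word listing its elements from smallest to largest.

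Second I would verify that under the identification $\Phi_n$ the map $\pi_{n-1}^{n}$ becomes restriction of orders from $[n]$ to $[n-1]$. Concretely, deletion of the letter $n$ from $\sigma(1)\dots\sigma(n)$ preserves the relative positions of all letters in $[n-1]$, which is exactly the statement that $\prec_{\pi_{n-1}^n(\sigma)}$ equals the restriction of $\prec_\sigma$ to $[n-1]$. The explicit piecewise formula given in the setup is only needed to check this once; after that one can argue entirely in terms of orders.

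Third I would construct the bijection $\O\to\{\text{total orders on }\mathbb N\}$. Given $(\sigma_n)_{n\in\mathbb N}\in\O$, declare $i\prec j$ iff $i\prec_{\sigma_n} j$ for some (equivalently, by coherence and step two, for every) $n\ge\max(i,j)$. Well-definedness follows from the fact that restriction of $\prec_{\sigma_{n'}}$ to $[n]$ agrees with $\prec_{\sigma_n}$ whenever $n'\ge n$. Reflexivity, antisymmetry, transitivity and totality of $\prec$ all reduce to the corresponding properties of $\prec_{\sigma_N}$ for a single $N$ large enough to contain all elements under consideration. Conversely, any total order on $\mathbb N$ restricts to a total order on each $[n]$, hence defines $\sigma_n:=\Phi_n^{-1}(\prec\!\!\restriction_{[n]})$, and by step two these form a coherent sequence. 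The two constructions are inverse to one another by inspection. The final assertion about $\pi^\infty_n$ is then immediate from the construction, since the coordinate map returns $\sigma_n$ whose associated order is, by definition, the restriction of $\prec$ to $[n]$.

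There is no real obstacle here; the argument is just an unpacking of definitions. The only point that requires more than a line is the compatibility in step two, but this has essentially been recorded already in the motivating paragraph preceding the projection formula.
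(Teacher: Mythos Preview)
Your proposal is correct and is essentially the same as the paper's approach: the paper treats the proposition as immediate from the coherence of permutations (it writes ``The coherence of permutations immediately implies:'' and gives no further proof), and your argument is simply a careful unpacking of that one-line justification.
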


\noindent
In what follows we identify elements of $\O$ with the orders corresponding
to them. We endow each $\Sym_n$ with the discrete topology, and endow $\O$ with the
product topology of projective limit, which corresponds to the  coordinate-wise convergence.
In this topology $\O$ is a compact totally disconnected space.

Let $\mathcal M(\O)$ be the space of Borel probability measures on $\O$. Each measure $P\in
\mathcal\O$ has marginal measures $P_n:=\hat{\pi}^\infty_{n}(P)$ which  satisfy the coherence
condition $P_{n-1}=\hat{\pi}^n_{n-1}(P_n)$ for $n>1$, where and henceforth $\hat{f}$ denotes the
pushforward of  measures under a mapping $f$. Conversely, by Kolmogorov's measure extension
theorem each coherent sequence $(P_n)$ determines a unique measure on $\O$.

Let $\mathcal M_R(\Sym_n)$ be the set of RD-measures on $\Sym_n$, as defined in the Introduction.
We call $P\in \mathcal M(\O)$ an RD-measure if  the marginal measures satisfy $P_n\in \mathcal
M_R(\Sym_n)$ for every $n$. We denote $\mathcal M_R(\O)$ the convex set of such RD-measures, and denote
${\rm ext} \,\mathcal M_R(\O)$ the set of extreme elements of $\mathcal M_R(\O)$.

The permutation statistic $R(\sigma)$ is consistent
with projections $\hat \pi_{n-1}^n$ in the following sense.
\begin{lemma}
\label{prop_stability_of_record_dependence}  If $P_n$ is an RD-measure  on $\Sym_n$
 then its projection $P_{n-1}:=\hat{\pi}^n_{n-1}(P_{n})$ is an RD-measure on $\Sym_{n-1}$.
\end{lemma}
\begin{proof} Recall that
for $\sigma \in \Sym_n$, the position $\sigma^{-1}(n)$ is the maximal element of $R(\sigma)$. Note
that $\sigma$ can be uniquely recovered from $\sigma^{-1}(n)$ and $\pi_{n-1}^n(\sigma)$. Thus, for
$\tau\in \Sym_{n-1}$ and $A\subset[n]$ the number $k$ of permutations $\sigma$ which satisfy
$R(\sigma)=A$ and $\tau=\pi_{n-1}^n(\sigma)$ depends only on $A$ and $B:=R(\tau)\subset[n-1]$.
Specifically, $k=1$ if for some $j\in[n]$ the set $A$  can be obtained by the
following deletion-insertion operation: delete from $B$ all elements greater than $j-1$ then
insert $j$ in the remaining set. Otherwise $k=0$. For instance, taking $n=7$ and
$B=\{1,3,5\}$  we have $k=1$ for
$$A=\{1\}, \{1,2\},  \{1,3\},   \{1,3,4\},  \{1,3,5\},  \{1,3,5,6\},  \{1,3,5,7\}.$$
The assertion follows since
$$P_{n-1}(\tau)=\sum_{A\subset [n]}~~\sum_{\sigma:\pi_{n-1}^n(\sigma)=\tau,R(\sigma)=A} P_n(\sigma)$$
depends only on $B$.
\end{proof}

An alternative  coordinatization of permutations is sometimes useful.
For a permutation $\sigma$
define $r_i$ as
the rank of $\sigma(i)$ among $\sigma(1),\dots,\sigma(i)$. That is to say, $r_i=k$ if $\sigma(i)$
is the $k$th smallest element in $\{\sigma(1),\dots,\sigma(i)\}$. We call the $r_i$'s {\it ranks}
 (other terminology found in the literature is `relative ranks' or `initial ranks').
The correspondence $\sigma\mapsto (r_1,\dots,r_n)$ is a bijection between $\Sym_n$ and
$[1]\times\cdots\times[n]$. Under the uniform distribution on $\Sym_n$ the ranks are independent random
variables, with $r_i$ uniformly distributed  on $[i]$.

\vskip0.2cm \noindent {\bf Remark.} The representation of permutations by rank sequences does not
sit well with the projections $\pi_{n-1}^n$, which in terms of the ranks $r_i$ are rather
involved.    In \cite{DM} other projections $(r_1,\dots,r_n)\mapsto (r_1,\dots,r_{n-1})$ were used
to study measures (\ref{Ewens}) and their generalizations.

\vskip0.2cm
For $\rho\subset[n]$ satisfying $1\in \rho$, let $P^\rho$ be the {\it elementary measure},
which is the uniform distribution on the set of permutations $\{\sigma\in {\frak S}_n: R(\sigma)=\rho\}$.
Note that  $P^\rho$ is a product measure in the rank coordinates: the $r_i$'s are independent,
 $r_i$ is uniformly distributed on $[i-1]$ for $i\notin \rho$, and $r_i= i$ almost surely for $i\in\rho$.
The set of RD-measures $\mathcal M_R(\Sym_n)$ is a simplex with $2^{n-1}$ extreme elements $P^\rho$.


The convex set $\mathcal M_R(\O)$ is a projective limit of the finite-dimensional simplices $\mathcal M_R(\Sym_n)$.
By the general theory (see e.g. \cite{Goodearl})  $\mathcal M_R(\O)$ is a
Choquet simplex, i.e. a convex compact set with the property of uniqueness of the representation
of a generic point as a convex mixture of the elements of  ${\rm ext} \,\mathcal M_R(\O)$.
 In view of this property it is important to determine the set of extreme RD-measures  ${\rm ext} \,\mathcal M_R(\O)$.

\section{Constructions of the extreme RD-measures}
\label{Section_Statement}

A natural concept of `a uniformly distributed random order' on $\mathbb N$ is the
probability  measure $P^*\in\mathcal M_R(\O)$  whose projection to $\Sym_n$ is the uniform distribution for every $n$.
A characteristic feature of  $P^*$ is  exchangeability, that is the invariance under bijections of $\mathbb N$.
The order can be neatly constructed in terms of a sequence $(\xi_i)$ of independent random
variables uniformly distributed on the unit interval, by letting  $i$ to precede $j$  iff
$\xi_i<\xi_j$. It is clear from this construction that the exchangeable order  is
almost surely dense and has neither maximal, nor minimal elements. Thus, $\mathbb N$ with this
order is isomorphic (as an ordered space) to $({\mathbb Q},<)$ $P^*$-almost surely, as it follows
from the classical characterization of dense orders  due to Hausdorff  (see \cite[Section III.11]{Haus}).

Under the uniform distribution on $\Sym_n$
the number of records
satisfies $|R(\sigma)|/\log n\to 1$ in probability, as is well known  \cite{Records_review}. It will be
clear from what follows that $P^*$ is the only RD-measure with sublinear growth of $|R(\sigma)|$
as $n\to\infty$. We introduce next  a family of random orders for which  the number of records is
asymptotically linear in $n$. The idea is to exploit  the ranks as in the construction of extreme
elements of $\mathcal M_R(\Sym_n)$. To that end, we need some preliminaries.

We define an infinite permutation as a bijection $\sigma:{\mathbb N}\to{\mathbb N}$
and  denote  $\Sym$ the set of  such bijections.  The symmetric group  ${\mathfrak S}_n$ is naturally embedded in $\mathfrak S$ as the set of bijections
 that satisfy $\sigma(j)=j$ for $j>n$, and
the infinite symmetric
group   is identified with ${\mathfrak S}_\infty=\cup_{n=1}^\infty {\mathfrak S}_n$.
Each $\sigma\in {\mathfrak S}$ defines an order on $\mathbb N$ by the familiar rule: $i$
precedes $j$ iff $\sigma^{-1}(i)<\sigma^{-1}(j)$. This order is of  the type of  the ordered set $({\mathbb N},<)$,
with the $j$th minimal element being $\sigma(j)$. By the virtue of this correspondence
$\Sym$ is embedded in $\O$. Note that the support of the exchangeable order is disjoint with
$\Sym$, i.e.\ $P^*(\Sym)=0$.

With $\sigma\in\Sym$ we associate an infinite sequence of ranks $r_1,r_2,\dots$, where $r_i$ is
the rank of $\sigma(i)$ among $\{\sigma(1),\dots,\sigma(i)\}$. More generally, for $i\leq j$ let
$r_{i,j}$ be the rank of $\sigma(i)$ among $\{\sigma(1),\dots,\sigma(j)\}$. The bivariate array
$r_{i,j}$ is determined by the diagonal entries $r_i$ by the virtue of the  recursion
\begin{equation}\label{rec-ranks}
r_{i,i}=r_i,~~~r_{i,j+1}=r_{i,j}+1(r_{j+1}\leq r_{i,j}),
\end{equation}
where $1('C')$ is 1 when the condition $'C'$ is true and is $0$ otherwise. Moreover, the
sequence $r_{i,i},r_{i,i+1},\dots$ is nondecreasing and eventually stabilizes at the value
\begin{equation}\label{sigma-r}
\sigma(i)=\lim_{j\to\infty} r_{i,j}.
\end{equation}
Thus, $\sigma\in \Sym$ is uniquely determined by the sequence of ranks $(r_i)$. This correspondence
suggests a criterion to identify the sequences of ranks corresponding to infinite permutations.
\begin{lemma}\label{inf-corr}
A sequence $(r_i)_{i\in {\mathbb N}}$ with $r_i\in[i]$ defines a bijection  $\sigma\in \Sym$ iff for every $i$ the
nondecreasing sequence $r_{i,i},r_{i,i+1},\dots$ defined recursively by {\rm (\ref{rec-ranks})}
is bounded. In this case $\sigma$ is given by
{\rm (\ref{sigma-r})}.
\end{lemma}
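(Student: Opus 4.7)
The plan is to prove the two directions of the biconditional separately. The forward direction (bijection $\Rightarrow$ bounded sequences) is essentially unpacking definitions; the backward direction is where the substantive work lies, with surjectivity being the main obstacle.

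For the forward direction, given $\sigma \in \Sym$, I would interpret the recursion (\ref{rec-ranks}) as exactly recording $r_{i,j}$ as the rank of $\sigma(i)$ among $\{\sigma(1), \dots, \sigma(j)\}$. This is at most $\sigma(i)$, since only $\sigma(i)$ positive integers do not exceed $\sigma(i)$, so the sequence is bounded. Moreover, surjectivity of $\sigma$ ensures that for $j$ large enough every value in $[\sigma(i)]$ appears in $\{\sigma(1), \dots, \sigma(j)\}$, forcing $r_{i,j} = \sigma(i)$ and yielding (\ref{sigma-r}).

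For the backward direction, assume $(r_{i,j})_{j \geq i}$ is bounded for each $i$. Being nondecreasing and integer-valued, it stabilizes at some finite stage, so $\sigma(i) := \lim_j r_{i,j}$ is a well-defined function $\mathbb{N} \to \mathbb{N}$. The key observation is that, by the bijection $\Sym_j \leftrightarrow [1] \times \cdots \times [j]$ recalled earlier in the paper, the truncation $(r_1, \dots, r_j)$ corresponds to a finite permutation $\sigma^{(j)} \in \Sym_j$ with $\sigma^{(j)}(i) = r_{i,j}$ for $i \in [j]$. Injectivity of $\sigma$ is then immediate: for $i \neq i'$, take $j$ large enough that both $r_{i,j}$ and $r_{i',j}$ have stabilized, and use that $\sigma^{(j)}$ is a bijection of $[j]$.

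Surjectivity is the main obstacle, and I would argue it by induction on $k$. Assume values $1, \dots, k-1$ are attained by $\sigma$ at the uniquely determined positions $i_1, \dots, i_{k-1}$. Past a stage $J$ at which all $r_{i_m, \cdot}$ are constant at $m$, the recursion forces $r_{j+1} \geq k$ for every $j \geq J$, since any smaller value would trigger a further increment at $i_{k-1}$. Let $i^*_j$ denote the position of value $k$ in $\sigma^{(j)}$; under the constraint $r_{j+1} \geq k$ the recursion makes $i^*_j$ either remain fixed (when $r_{j+1} > k$) or jump to the fresh index $j+1$ (when $r_{j+1} = k$). The crux is that once a first jump occurs, moving $i^*_j$ to some position $p_0$ with value $k$, any subsequent jump event $r_{j'+1} = k$ will satisfy $r_{j'+1} = k \leq r_{p_0, j'}$ (by monotonicity of the rank sequence at $p_0$) and hence trigger an increment there. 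Infinitely many jumps would therefore make $r_{p_0, \cdot}$ unbounded, contradicting the hypothesis; so the jumps cease, $i^*_j$ stabilizes at some $i^* \in \mathbb{N}$, and $\sigma(i^*) = k$, closing the induction.
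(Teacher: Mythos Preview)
The paper states this lemma without proof; the discussion preceding it covers the forward direction and formula~(\ref{sigma-r}) (the rank of $\sigma(i)$ in $\{\sigma(1),\dots,\sigma(j)\}$ is at most $\sigma(i)$ and stabilizes there once $\sigma$ has hit every value below), but the converse is left to the reader. Your proposal therefore cannot be compared to a written argument in the paper, so let me simply assess it.

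Your forward direction and injectivity argument are correct and are exactly the kind of unpacking the paper presumably had in mind. The surjectivity argument is the substantive contribution, and it is sound. Two minor remarks. First, you should make explicit that $J$ is chosen with $J\geq k$, so that the value $k$ actually occurs in $\sigma^{(J)}$ and $i^*_J$ is defined; this is harmless since the positions $i_1,\dots,i_{k-1}$ are distinct and $J$ can be taken past all of them and past $k$. Second, the detour through ``the first jump'' and the auxiliary position $p_0$ is not needed: already $i^*_J$ itself satisfies $r_{i^*_J,J}=k$, hence $r_{i^*_J,j'}\geq k$ for all $j'\geq J$ by monotonicity, and every event $r_{j'+1}=k$ increments $r_{i^*_J,\cdot}$. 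Infinitely many such events would make $r_{i^*_J,\cdot}$ unbounded, contradicting the hypothesis; so $i^*_j$ stabilizes and $\sigma$ attains $k$. This streamlines the argument but does not change its content. With these cosmetic adjustments your proof is complete and correct.
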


Now let $\alpha=(\alpha_k)$  be a strictly increasing sequence of positive integers. For
notational convenience we assume that the sequence is infinite, but our considerations also apply
to finite sequences with obvious modifications. Assume that
\begin{equation}\label{alpha-finite}
 \sum_{k=1}^{\infty} \frac{1}{\alpha_k}<\infty.
\end{equation}
Let $P^{(\alpha,1)}$ be the product measure on $[1]\times[2]\times\cdots$ which makes the coordinates
$r_i$  independent and satisfying
\begin{itemize}
\item[(i)]  $r_i\equiv i$  for $i\notin \{\alpha_1+1,\alpha_2+1,\dots\}$,
\item[(ii)] $r_i$ is uniformly distributed on $[i-1]$ for $i\in \{\alpha_1+1,\alpha_2+1,\dots\}$.
\end{itemize}

\begin{lemma}   A random sequence $(r_i)$ with distribution
$P^{(\alpha,1)}$  almost surely determines,  via {\rm (\ref{rec-ranks})} and {\rm
(\ref{sigma-r})}, a random element of $\Sym$.
\end{lemma}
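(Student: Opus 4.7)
By Lemma \ref{inf-corr}, it suffices to show that for each fixed $i \in \mathbb{N}$ the nondecreasing sequence $r_{i,i}, r_{i,i+1}, \ldots$ obtained from the recursion (\ref{rec-ranks}) is $P^{(\alpha,1)}$-almost surely bounded; a countable union over $i$ then yields a sequence $(r_i)$ defining a bijection $\sigma\in\Sym$ almost surely via (\ref{sigma-r}).

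Fix $i$. Under $P^{(\alpha,1)}$ the rank $r_j$ is deterministically equal to $j$ for every $j \notin \{\alpha_1+1, \alpha_2+1, \ldots\}$. Since $r_{i,j} \leq j$ in all cases, the indicator $1(r_{j+1} \leq r_{i,j})$ in (\ref{rec-ranks}) vanishes whenever $j+1$ is of this deterministic type, so the trajectory $j \mapsto r_{i,j}$ stays constant between consecutive times of the form $\alpha_k+1$ and can increase by at most one unit at each such time. Let $k_0 := \min\{k : \alpha_k \geq i\}$ and, for $k \geq k_0$, set $X_k := r_{i,\alpha_k+1}$. Independence of the $r_j$'s together with uniformity of $r_{\alpha_{k+1}+1}$ on $[\alpha_{k+1}]$ gives the Markovian recursion
\[
\Pr(X_{k+1} = X_k + 1 \mid X_k) = X_k / \alpha_{k+1}, \qquad X_{k+1} \in \{X_k, X_k+1\}.
\]

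Taking expectations yields $\mathbb{E}[X_{k+1}] = \mathbb{E}[X_k]\,(1 + 1/\alpha_{k+1})$, so
\[
\mathbb{E}[X_k] \;=\; \mathbb{E}[X_{k_0}] \prod_{\ell = k_0+1}^{k} \bigl(1 + 1/\alpha_\ell\bigr).
\]
Using $\log(1+x) \leq x$, the summability assumption (\ref{alpha-finite}) makes the infinite product converge, so $\sup_k \mathbb{E}[X_k] < \infty$. Since $X_k$ is monotone, monotone convergence gives $\mathbb{E}[\lim_k X_k] < \infty$, whence $r_{i,\infty} = \lim_k X_k$ is almost surely finite. The only slightly delicate point is the structural observation that $r_{i,\cdot}$ can move only at indices $\alpha_k+1$; this is immediate from $r_{i,j} \leq j$ and the deterministic values of $r_j$ off $\{\alpha_k+1\}$, and the rest is a routine first-moment bound in which the role of (\ref{alpha-finite}) is transparent.
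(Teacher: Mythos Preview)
Your proof is correct and follows essentially the same route as the paper's: both bound the expectation of $r_{i,j}$ by the finite product $\prod_{k:\alpha_k\ge i}(1+1/\alpha_k)$, which converges by (\ref{alpha-finite}), and then pass from finiteness of the expected limit to almost-sure finiteness of the limit itself. The only cosmetic differences are that you introduce the subsequence $X_k$ explicitly and invoke monotone convergence, whereas the paper conditions on $r_i=s$ and cites Fatou; for a nonnegative monotone sequence these amount to the same thing.
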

\begin{proof}
Fix $i$ and condition on the event $r_i=s$. If $i\notin \{\alpha_1,\alpha_2,\dots\}$ then $r_{i,i+1}=s$.
If $i\in \{\alpha_1,\alpha_2,\dots\}$ then the expected value of $r_{i,i+1}$ is $s+s/i$. Iterating
we see that the expected value of $r_{i,j}$ converges, as $j\to\infty$,  to
$$ s\prod_{k\,:\, \alpha_k\geq i} \left(1+{1\over \alpha_k}\right),$$
which is finite in view of (\ref{alpha-finite}). Therefore, Fatou's lemma implies that $r_{i,j}$
is bounded in $j$ and Lemma \ref{inf-corr} can be applied.
\end{proof}
\noindent Using the correspondence between the  rank sequences $(r_i)$ and the infinite permutations we
consider $P^{(\alpha,1)}$ as a measure on $\O$ supported by $\Sym$. Similarly to $\Sym_n$, for
$\sigma\in\Sym$ we define position $j$ to be a record if $\sigma(j)=\max_{i\in[j]}\sigma(i)$. Then
under $P^{(\alpha,1)}$ the records are positions not of the kind $\alpha_k+1$.
Note that if the sequence $\alpha$ is finite, then $P^{(\alpha,1)}$ is supported by ${\mathfrak S}_\infty$.

\paragraph{The dual algorithm}
There is a dual stochastic algorithm that produces  $P^{(\alpha,1)}$  via  the entries of the
inverse infinite permutation $\sigma^{-1}(1), \sigma^{-1}(2),\cdots$. Note that the position of
integer $1$ belongs to  $\{1,\alpha_1+1,\alpha_2+1,\dots\}$, and that in terms of ranks we have
$\sigma^{-1}(1)=\max\{i: r_i=1\}$. Hence, introducing a random variable $\nu_1$ with distribution
\begin{eqnarray}
\label{rule-inverse1}
{\rm Prob}
\{\nu_1=0\}&=& \prod_{m=1}^\infty \left(1-{1\over \alpha_{m}}\right),  \\
\label{rule-inverse2} {\rm Prob} \{\nu_1=k\}&=&{1\over \alpha_k} \prod_{m=1}^\infty
\left(1-{1\over \alpha_{k+m}}\right), ~~~k=1,2,\dots,
\end{eqnarray}
the position of $1$ can be defined  as $\sigma^{-1}(1)=y_1$, where
$$y_1= 1(\nu_1=0)+ (\alpha_{\nu_1}+1) 1(\nu_1\neq 0).$$
Given the value $\sigma^{-1}(1)$,
define  a new sequence $\alpha'$ by the following rules
\begin{itemize}
\item[(i)] if $\sigma^{-1}(1)=1$ and $\alpha_1\geq 2$ then $\alpha_k'=\alpha_k-1$ for $k\geq 1,$
\item[(ii)] if $\sigma^{-1}(1)=1$ and $\alpha_1=1$ then $\alpha_k'=\alpha_{k+1}-1$ for $k\geq 1,$
\item[(iii)] if $\sigma^{-1}(1)=\alpha_i+1$ then  $\alpha'_k=\alpha_k$ for $k< i$ and
$\alpha_k'=\alpha_{k+1}-1$ for $k\geq i$.
\end{itemize}
Let $\nu_2$ be distributed as in the right-hand side of (\ref{rule-inverse1}),
(\ref{rule-inverse2}) but with $\alpha'$ in place of $\alpha$. Finally, let $\sigma^{-1}(2)$ be
the $y_2$th element of ${\mathbb N}\setminus \{\sigma^{-1}(1)\}$ for  $y_2= 1(\nu_2=0)+
(\alpha_{\nu_2}+1) 1(\nu_2\neq 0)$.  Then we iterate on ${\mathbb N}\setminus
\{\sigma^{-1}(1),\sigma^{-1}(2)\}$ and so on.

In Section
\ref{Section_laws_of_large_numbers} we will give a more direct proof that all positions
eventually get filled,  hence the output of the dual algorithm is indeed a random permutation $\sigma^{-1}\in\Sym$.

\vskip0.3cm

Finally, we construct a larger family of RD-measures by interpolating between  $P^*$ and $P^{(\alpha,1)}$'s.
Fix $\alpha$ satisfying (\ref{alpha-finite}) and $0<p\leq 1$. Split $\mathbb N$ in two infinite
subsets $N_1$ and $N_2$ by assigning each integer independently to $N_1$ with probability $p$ and
to $N_2$ with probability $1-p$. Using increasing bijections we can identify $N_1$ and $N_2$ with
two copies of $\mathbb N$. We construct an order by requiring that every $i\in N_1$ precedes every
$j\in N_2$ and (using the identifications with $\mathbb N$) by ordering $N_1$ according to $P^{(\alpha,1)}$ and
ordering $N_2$ according to   $P^*$. The distribution of the resulting order is denoted
$P^{(\alpha,p)}$.


Let $\Omega$ denote the set comprised of a point $*$ and of pairs $(\alpha,p)$, where $\alpha$ is
a strictly increasing sequence of positive integers
 satisfying
(\ref{alpha-finite}), and $0<p\le 1$.
The space $\Omega$ is a topological cone obtained by collapsing one face of the cylinder
$\{\alpha\}\times[0,1]$ in the point $*$. In this topology the convergence to $(\alpha,p)$ is
component-wise, and the convergence to $*$ means that the $p$-component goes to $0$. For  a generic
point $\omega$ of $\Omega$ (either $*$ or some $(\alpha,p)$), $P^\omega$ will denote the
corresponding measure.


\begin{theorem}
\label{theorem_extreme_points} Measures $P^{\omega}$ with $\omega\in \Omega$ comprise the set
${\rm ext}\,\mathcal M_R(\O)$ of the extreme RD-measures. The topology on $\Omega$ agrees with the
topology of weak convergence of measures on $\O$.
\end{theorem}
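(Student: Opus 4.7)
The plan is to split the theorem into two directions. First I would verify that each $P^{(\alpha,p)}$ and $P^*$ lies in $\mathcal M_R(\mathcal O)$ and is extreme. Membership is immediate by projecting to $\Sym_n$ and recognising the marginal as a convex combination of the finite extremes $P^\rho$, so the RD property descends by Lemma \ref{prop_stability_of_record_dependence}. Extremality I would prove by tail-triviality. In the rank coordinates the law $P^{(\alpha,1)}$ is a product measure, so Kolmogorov's zero-one law trivialises the tail $\sigma$-algebra; since $\mathcal M_R(\mathcal O)$ is a Choquet simplex whose extremes are exactly the tail-trivial measures (general projective-limit theory, \cite{Goodearl}), this pins down extremality. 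For $P^{(\alpha,p)}$ I would condition on the Bernoulli$(p)$ partition $(N_1,N_2)$ and reduce to the independent product $P^{(\alpha,1)}\otimes P^*$, handling the exchangeable factor $P^*$ by the Hewitt--Savage zero-one law.

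The harder direction is to show every extreme $P\in\mathrm{ext}\,\mathcal M_R(\mathcal O)$ has the asserted form. I would fix such $P$ and study the random record set $R(\sigma_n)$ as $n\to\infty$. By martingale convergence applied to the tail-measurable statistic $|R(\sigma_n)|/n$, together with the ergodicity implied by extremality, the limit exists $P$-almost surely and equals a deterministic $p\in[0,1]$ --- this is where Proposition \ref{prop_law_of_large_numbers} enters. The case $p=0$ should yield $P=P^*$: the argument is that sublinear record growth combined with the RD property forces the conditional law of $(r_1,\dots,r_n)$ to approach the uniform product measure on $[1]\times\cdots\times[n]$, and extremality then identifies $P$ with the unique such candidate, $P^*$.

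The case $p>0$ is where the discrete parameter $\alpha$ must be extracted, and this is the core of the argument. The idea is to run the dual algorithm of Section \ref{Section_Statement} backwards. Under $P$, the positions $\sigma^{-1}(1),\sigma^{-1}(2),\dots$ --- realised at finite level $n$ by the successive ``smallest surviving'' indices of $R(\sigma_n)$ --- form a process whose one-step distribution must, by extremality and a further martingale-convergence step, have the form (\ref{rule-inverse1})--(\ref{rule-inverse2}) for some sequence $\alpha$; the atoms of the weak limit of the law of $\sigma_n^{-1}(1)$ identify $\alpha_1$ (possibly equal to $+\infty$, which handles the finite-$\alpha$ boundary stratum), and iterating the updating rules (i)--(iii) of Section \ref{Section_Statement} recovers the whole sequence. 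The residual ``tail'' of positions, complementary to those absorbed by the algorithm, carries asymptotic density $1-p$ and is recognised as a $P^*$-block by the almost-sure exchangeability of the remaining ranks. This identifies $P$ with $P^{(\alpha,p)}$.

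Finally, for the topology statement, continuity of $(\alpha,p)\mapsto P^{(\alpha,p)}$ at each finite level is immediate from the explicit marginal formulas; the reverse implication uses the identifiability of $(\alpha,p)$ from $P^{(\alpha,p)}$ established above, combined with compactness of $\Omega$ (as the cone obtained by collapsing the $p=0$ face of $\{\alpha\}\times[0,1]$) and weak compactness of $\mathcal M_R(\mathcal O)$. The main obstacle I anticipate is the $p>0$ case: extracting the integer sequence $\alpha$ demands convergence sharper than a law of large numbers, and the iterative nature of the dual algorithm forces one to propagate a careful coupling across infinitely many updating steps before the decomposition into an $\mathbb N$-type initial segment and a $\mathbb Q$-type tail can be made rigorous; the summability condition (\ref{alpha-finite}) should be the crucial input that keeps this bookkeeping finite.
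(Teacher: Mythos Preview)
Your route diverges substantially from the paper's, and the divergence matters. The paper does not attempt to extract $(\alpha,p)$ directly from an abstract extreme $P$. Instead it uses the general approximation principle (Lemma~\ref{theorem_approximation}): every extreme central measure is a weak limit of the elementary measures $P^{\rho_n}$ along $P$-almost every path. This reduces the ``every extreme is some $P^\omega$'' direction to a concrete computation---Theorem~\ref{theorem_Martin_boundary}, which classifies all such limits by analysing the explicit transition probabilities in Propositions~\ref{prop_pos_1_finite}--\ref{prop_convergence_to_non_uniform}. The opposite inclusion is then obtained from Proposition~\ref{prop_law_of_large_numbers}: distinct $\omega$'s satisfy distinct almost-sure laws of large numbers, so the $P^\omega$ have pairwise disjoint supports and none can be a nontrivial mixture over the Martin boundary.

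Your converse direction has a genuine gap. You invoke ``martingale convergence applied to the tail-measurable statistic $|R(\sigma_n)|/n$'', but $|R(\sigma_n)|/n$ is not a martingale for the backward dynamics of $\mathcal R$, and you give no substitute. More seriously, even granting a deterministic limit $p$, your plan to ``run the dual algorithm backwards'' and read off $\alpha$ from the limiting law of $\sigma_n^{-1}(1)$ is exactly the content of Theorem~\ref{theorem_Martin_boundary}, and that theorem is proved for limits of \emph{elementary} measures, where one has the closed formulas of Propositions~\ref{prop_pos_1_finite} and~\ref{prop_pos234_finite} to work with. Starting instead from an abstract extreme $P$ you have no such formulas; you would first need something like Lemma~\ref{theorem_approximation} to get them, which is precisely the step you omit. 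Proposition~\ref{prop_law_of_large_numbers}, which you cite here, is stated only for the specific measures $P^{(\alpha,p)}$ and gives you nothing about a hypothetical extreme $P$ not yet known to be of that form.

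A smaller issue: your extremality argument via Kolmogorov's zero--one law is set in rank coordinates, but the paper's own Remark in Section~1 warns that the ranks do not sit well with the projections $\pi_{n-1}^n$. The tail $\sigma$-algebra relevant to extremality in $\mathcal M_C(\Gamma)$ is the path-space tail $\bigcap_N\sigma(\rho_N,\rho_{N+1},\ldots)$, and product structure in the $(r_i)$ does not automatically trivialise it. The paper sidesteps this entirely by using the disjoint-supports argument from the law of large numbers, which is both shorter and reuses machinery already needed elsewhere.
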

\noindent
Taken together with the uniqueness property of Choquet simplex, this result implies:



\begin{corollary}
\label{prop_simplex}
For every $P\in\mathcal M_R(\O)$
there
 exists a unique probability measure $\mu$ on $\Omega$, such that
$$  P=\int_{\Omega} P^{\omega} \mu (d\omega). $$
\end{corollary}

The next result is a law of large numbers for the extreme measures $P^\omega$.
\begin{proposition}
\label{prop_law_of_large_numbers}
Let $O$ be a random element
 of $\O$ distributed according to $P^{(\alpha,p)}$.
Then $P^{(\alpha,p)}$-almost surely
\begin{itemize}
\item[\rm(i)]
for every $k$ the $k$th
 non-record position in $\pi^{\infty}_n(O)$  converges to $\alpha_k+1$
as $n\to\infty$,

\item[\rm(ii)]
 $$
  \liminf_{n\to\infty} \frac{(\pi^{\infty}_n(O))^{-1}(n)}{n}=p.
 $$
\end{itemize}
 \end{proposition}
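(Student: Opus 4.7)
The plan is to work with the explicit construction of $P^{(\alpha,p)}$: split $\mathbb{N}=N_1\sqcup N_2$ by i.i.d.\ Bernoulli$(p)$ assignments, order $N_1$ by $P^{(\alpha,1)}$ and $N_2$ by $P^*$, with every element of $N_1$ preceding every element of $N_2$. Set $\tau_n:=\pi^\infty_n(O)$ and $a_n:=|N_1\cap[n]|$; the strong law of large numbers gives $a_n/n\to p$ a.s., hence $a_n\to\infty$ a.s.\ since $p>0$. Identifying $N_1$ with $\mathbb{N}$ via the increasing bijection, the first $a_n$ positions of $\tau_n$ list $A_n:=N_1\cap[n]$ in the $P^{(\alpha,1)}$-order and thereby agree with $\tau^{(1)}_{a_n}:=\pi^\infty_{a_n}(\sigma^{(1)})$ for a random $\sigma^{(1)}\sim P^{(\alpha,1)}$, while the last $n-a_n$ positions list $B_n:=N_2\cap[n]$ in the $P^*$-order.

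For part (i), the identification is order-preserving, so records and non-records of $\tau_n$ at positions $\leq a_n$ coincide with those of $\tau^{(1)}_{a_n}$. For any fixed $M$, once $\sigma^{(1)}(1),\ldots,\sigma^{(1)}(M)\leq a_n$, the first $M$ entries of $\tau^{(1)}_{a_n}$ simply equal $\sigma^{(1)}(1),\ldots,\sigma^{(1)}(M)$, and this condition holds a.s.\ for all large $n$ because $a_n\to\infty$ and $\sigma^{(1)}$ is a bijection of $\mathbb{N}$. Since the non-record positions of $\sigma^{(1)}$ are exactly $\{\alpha_j+1\}$ by construction, taking $M=\alpha_k+1$ shows that the first $k$ non-record positions of $\tau_n$ eventually coincide with $\alpha_1+1,\ldots,\alpha_k+1$ a.s.

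For part (ii), I split by whether $n\in N_1$ or $n\in N_2$. Because $A_n$-elements occupy positions $1,\ldots,a_n$ in $\tau_n$: for $n\in N_2$ one has $\tau_n^{-1}(n)\geq a_n+1$, so $\tau_n^{-1}(n)/n\geq(a_n+1)/n\to p$ a.s.; for $n\in N_1$ the element $n$ is the maximum of $A_n$, hence $\tau_n^{-1}(n)=(\tau^{(1)}_{a_n})^{-1}(a_n)\leq a_n$, giving $\limsup_{n\in N_1}\tau_n^{-1}(n)/n\leq p$ and so $\liminf\leq p$ (the subsequence $N_1$ is a.s.\ infinite for $p>0$). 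The matching lower bound $\liminf\geq p$ reduces to the key sub-claim that, under $P^{(\alpha,1)}$, $(\tau^{(1)}_m)^{-1}(m)/m\to 1$ a.s.\ as $m\to\infty$; granted this, $\tau_n^{-1}(n)/n=(\tau^{(1)}_{a_n})^{-1}(a_n)/a_n\cdot a_n/n\to p$ along $n\in N_1$, which combined with the $N_2$-bound yields the proposition.

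The hard part is this sub-claim, which asserts that the maximum value $m$ sits asymptotically at the end of the $[m]$-restriction of $\sigma^{(1)}$. Setting $M_m:=(\sigma^{(1)})^{-1}(m)$, one checks $(\tau^{(1)}_m)^{-1}(m)=m-L_m$, where $L_m:=|\{v<m:(\sigma^{(1)})^{-1}(v)>M_m\}|$ counts the values below $m$ placed after $m$ in $\sigma^{(1)}$. Such delayed values can only sit at non-record positions of $\sigma^{(1)}$, since any record position beyond $M_m$ carries a new maximum exceeding $m$. A Borel--Cantelli estimate combining the vanishing density of the non-record positions $\{\alpha_k+1\}$ (ensured by $\sum 1/\alpha_k<\infty$) with the distribution of $y_m$ from the dual algorithm (which controls $M_m$) should then deliver $L_m/m\to 0$ a.s.
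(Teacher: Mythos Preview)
Your overall decomposition into the $N_1/N_2$ parts is exactly what the paper does in Section~\ref{Section_laws_of_large_numbers}, and your treatment of part~(i) as well as the reduction of part~(ii) to the key sub-claim (the case $p=1$: under $P^{(\alpha,1)}$ one has $(\tau^{(1)}_m)^{-1}(m)/m\to1$ a.s.) is correct and matches the paper closely. Your upper bound $\liminf\le p$ via the subsequence $n\in N_1$ is in fact slightly slicker than the paper's route, which instead invokes Proposition~\ref{prop_large_numbers_for_zero} on the $N_2$-part.

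The genuine gap is the sub-claim itself, which is the heart of the matter (the paper's Proposition~\ref{prop_large_numbers_unmixed}). Your last paragraph only gestures at it. The observation that delayed values sit at non-record positions is correct and gives $L_m\le|\{k:\alpha_k+1>M_m,\ \sigma^{(1)}(\alpha_k+1)<m\}|$, but neither the vanishing density of $\{\alpha_k+1\}$ in $\mathbb N$ nor the one-step law of $y_m$ from the dual algorithm suffices to turn this into $L_m/m\to0$ a.s.: one needs control that is \emph{uniform in the step index} of the dual algorithm, since the parameter sequence $\alpha$ is modified at every step in a history-dependent way. The paper supplies exactly this as Lemma~\ref{lemma_estimate_in_algorithm}: for every $\varepsilon>0$ there exist $C,n_0$ with
\[
P^{(\alpha,1)}\bigl(\sigma^{-1}(k)>Cn\ \big|\ \sigma^{-1}(1),\ldots,\sigma^{-1}(k-1)\bigr)<\varepsilon
\qquad\text{for all }k\le n,\ n>n_0.
\]
From this uniform bound, $|\{i\le n:\sigma^{-1}(i)\le Cn\}|$ stochastically dominates a sum of $n-n_0$ independent Bernoulli$(1-\varepsilon)$ variables; a large-deviations estimate plus Borel--Cantelli then shows that positions $1,\ldots,\lfloor(1-3\varepsilon)n\rfloor$ are a.s.\ eventually contained in $\{\sigma^{-1}(1),\ldots,\sigma^{-1}(n)\}$, forcing $(\tau^{(1)}_{n+1})^{-1}(n+1)>(1-3\varepsilon)n$. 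Your $L_m$-formulation can presumably be completed, but it will require an estimate of this strength; the phrase ``should then deliver'' is precisely where the missing work lies.
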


\smallskip
The proofs of Theorem \ref{theorem_extreme_points} and Proposition
\ref{prop_law_of_large_numbers} are postponed to later sections.

\noindent
 {\bf Remark 1. } If $\alpha_k$ is not defined, then under convergence to $\alpha_k +1 $ we mean that if
for large $n$ the $k$th non-record position in $\pi^{\infty}_n(O)$ exists, then it converges to
$+\infty$.


\noindent
 {\bf Remark 2. } Under $P^{(\alpha,p)}$ the number of records is asymptotically linear in $n$, so that
$|R(\pi^{\infty}_n(O))|/(np)\to 1$ in probability. Under $P^*$ the
 position of $n$ in $\pi^{\infty}_n(O)$ has uniform distribution on $[n]$, hence,
relation (ii) holds with $p=0$.



\section{The branching graph representation}
\label{Section_Branching_graph}

In this section we recast the setting of RD-measures on permutations within a general formalism of
central measures on branching graphs \cite{Kerov_book, KOO}.

The succession of permutations of different sizes and their record sets is representable
in the form of an infinite graded graph $\cal R$.
It is convenient to
 encode each admissible $\rho\subset [n]$  into
a binary word $\rho(1)\dots\rho(n)$ starting with $\rho(1)=1$. For instance,
$\{1,3,4\}\subset[5]$ becomes $10110$.
Let $\R_n$ denote the set of all $2^{n-1}$ such binary words of length $n$.

Consider  a  graded graph $\R$ with the set of vertices $\bigcup_{n=1}^{\infty} \R_n$, and with
edges connecting vertices on neighboring levels according to the rule: two vertices
$\rho=\rho(1)\dots \rho(n)\in\R_n$ and $\tau=\tau(1)\dots \tau({n+1})\in \R_{n+1}$ are connected
by an edge, denoted $\rho \nearrow \tau$, if there exists $k\in[ n+1]$ such that
\begin{itemize}
\item[(i)] $\rho(i)=\tau(i)$ for $i<k$,
\item[(ii)] $\tau(k)=1$,
\item[(iii)] $\tau(i)=0$ for $i>k$.
\end{itemize}
The first four levels of $\R$ are shown in Figure \ref{Fig_graph}.

\begin{figure}[h]
\begin{center}
\thicklines
\begin{picture}(300,200)

\put(5,100){$1$} \put(15,107){\line(1,1){40}} \put(15,101){\line(1,-1){40}}

\put(60,50){$10$} \put(75,51){\line(5,-1){40}} \put(75,54){\line(3,2){40}}
\put(75,57){\line(2,3){40}} \put(60,150){$11$} \put(75,157){\line(5,1){40}}
\put(75,154){\line(3,-2){40}} \put(75,150){\line(2,-3){40}}

\put(120, 40){$100$} \put(140,38){\line(5,-3){40}} \put(140,41){\line(6,0){40}}
\put(140,44){\line(2,1){40}} \put(140,47){\line(2,3){40}}

 \put(120, 80){$101$} \put(140,76){\line(2,-3){40}} \put(140,81){\line(3,-1){40}}
\put(140,84){\line(2,0){40}} \put(140,87){\line(3,2){40}}

 \put(120, 120){$110$} \put(140,119){\line(2,-5){40}} \put(140,122){\line(6,-1){40}}
\put(140,125){\line(3,1){40}} \put(140,128){\line(5,4){40}}

 \put(120, 160){$111$}  \put(138,158){\line(1,-3){45}} \put(140,161){\line(1,-1){40}}
\put(140,164){\line(3,0){40}} \put(140,167){\line(2,1){40}}

\put(180, 10){$1000$} \put(180, 35){$1001$} \put(180, 60){$1010$} \put(180, 85){$1011$} \put(180,
110){$1100$} \put(180, 135){$1101$} \put(180, 160){$1110$} \put(180, 185){$1111$}




\end{picture}
\end{center}
\caption{\label{Fig_graph} The first four levels of graph $\R$.}
\end{figure}
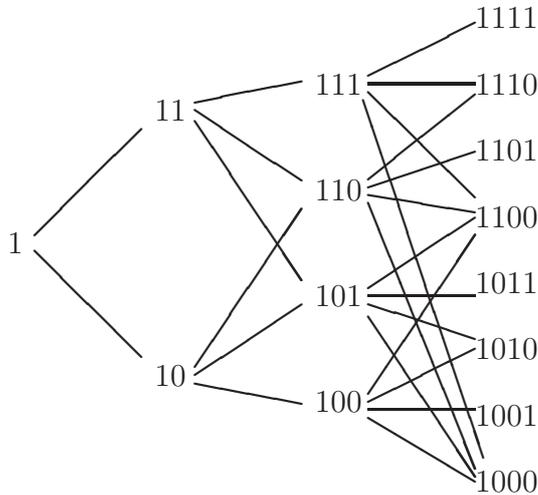

A (standard) path in $\R$ is a sequence of vertices $(\rho_i)$ such that $\rho_i\in \R_i$ and
$\rho_i\nearrow\rho_{i+1}$. Let  $\Gamma$ be the set of infinite paths
$\rho_1\nearrow\rho_2\nearrow\dots$, and let $\Gamma_n$ be the set of paths
$\rho_1\nearrow\ldots\nearrow\rho_n$ of length $n$. We view  $\Gamma$ as the projective limit of
finite sets $\Gamma_n$, and we equip $\Gamma$ with the usual topology of projective limit of
discrete spaces. Recall that $R(\sigma)\in\R_n$ for  $\sigma\in\Sym_n$.
\begin{proposition}
\label{prop_paths_are_permutations}
 The map
 $$  \Phi_n:  \sigma\to (R(\pi^n_j(\sigma)),~j\in [n])$$
(where $\pi_n^n$ is the identity map) is a bijection between $\Sym_n$ and $\Gamma_n$. Similarly,
the map
 $$  \Phi:\, O\to (R(\pi^\infty_n(O)), n\in{\mathbb N})$$
 is a homeomorphism between $\O$ and $\Gamma$.
\end{proposition}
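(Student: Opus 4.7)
The plan is to prove the finite-level statement that $\Phi_n$ is a bijection $\Sym_n \to \Gamma_n$ and then derive the infinite case by passing to the projective limit. The key observation is the dual description of the projection $\pi_{n-1}^n$: the permutation $\sigma \in \Sym_n$ is obtained from $\pi_{n-1}^n(\sigma) \in \Sym_{n-1}$ by \emph{inserting} the letter $n$ at position $k:=\sigma^{-1}(n)$. Because $n$ is the largest letter, position $k$ is automatically a record of $\sigma$, every position $i>k$ has $n$ to its left and so is not a record of $\sigma$, and every position $i<k$ carries the same letter in $\sigma$ and in $\pi_{n-1}^n(\sigma)$ with the same set of letters to its left, hence is a record of $\sigma$ iff it is a record of $\pi_{n-1}^n(\sigma)$. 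Translated into binary words, this is exactly the edge condition $R(\pi_{n-1}^n(\sigma)) \nearrow R(\sigma)$ with distinguished index $k$, so $\Phi_n$ lands in $\Gamma_n$.

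For injectivity, observe that the distinguished index $k$ of any edge $\rho \nearrow \tau$ is intrinsic to the pair $(\rho,\tau)$ (concretely, $k$ is the position of the rightmost $1$ in $\tau$, because $\tau(i)=0$ for $i>k$). By the previous paragraph, this index equals the position at which the letter $j$ was inserted when producing $\pi_j^n(\sigma)$ from $\pi_{j-1}^n(\sigma)$. Consequently, $\Phi_n(\sigma)$ determines the whole sequence of insertion positions of $2,3,\dots,n$, and hence $\sigma$. Surjectivity is obtained by running the same procedure forward: given a path $\rho_1 \nearrow \dots \nearrow \rho_n$ in $\Gamma_n$, start from the unique permutation in $\Sym_1$ and, at step $j$, insert $j$ at the distinguished index of the edge $\rho_{j-1}\nearrow \rho_j$; the analysis of the first paragraph then guarantees that the record set of the resulting permutation at each intermediate stage is exactly $\rho_j$.

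For the infinite statement, the crucial point is that the maps $\Phi_n$ intertwine the projections $\pi_{n-1}^n$ on the permutation side with the truncation maps $\Gamma_n \to \Gamma_{n-1}$ on the graph side: the first $n-1$ record sets appearing in $\Phi_n(\sigma)$ are by construction the record sets of $\pi_j^n(\sigma)$ for $j<n$, which, by coherence of the projections of $O$, coincide with $\Phi_{n-1}(\pi_{n-1}^n(\sigma))$. Hence the $\Phi_n$ assemble into a well-defined bijection $\Phi$ of the projective limits $\O$ and $\Gamma$. Each $\Sym_n$ and each $\Gamma_n$ is finite and thus discrete, so every $\Phi_n$ is trivially a homeomorphism; a bijection of projective limits of compact discrete systems induced by compatible homeomorphisms of the levels is automatically a homeomorphism in the projective-limit topology.

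The only step that requires any genuine content is the analysis of how the record set transforms under insertion of the maximal letter in the first paragraph; the rest is bookkeeping. I expect no hidden obstacle, since the edge relation in $\R$ was essentially designed to encode precisely this insertion rule.
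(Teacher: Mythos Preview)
Your proof is correct and follows the same approach as the paper: both rest on the observation that the insertion position $\sigma^{-1}(n)$ is recoverable from the edge $R(\pi_{n-1}^n(\sigma))\nearrow R(\sigma)$ (it is the rightmost $1$ in the latter word), so that the path $\Phi_n(\sigma)$ encodes the full sequence of insertion positions $(\sigma_j^{-1}(j))_{j\in[n]}$, which in turn determines $\sigma$. The paper's version is terser---it simply refers back to the proof of Lemma~\ref{prop_stability_of_record_dependence} for the key observation and leaves surjectivity and the projective-limit passage implicit---whereas you spell these out explicitly.
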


\begin{proof} Let $\sigma_n=\pi^\infty_n(O)$.
As in the proof of Lemma \ref{prop_stability_of_record_dependence},
$\sigma_n^{-1}(n)$ is uniquely determined by
$R(\sigma_n)$ and $R(\sigma_{n-1})$. On the other hand,
$\sigma_n$ is uniquely determined by $\sigma_j^{-1}(j)$, $j\in [n]$, by the virtue of a correspondence analogous to
the bijection between $\Sym_n$ and the sequence of $n$ ranks.
\end{proof}

Identifying finite paths with permutations, and infinite paths with orders on $\mathbb N$ we use
the same symbols as above for measures and projections. For instance, $\pi_n^\infty$ denotes the
projection $\Gamma\to\Gamma_n$ which cuts the tail of a path up to the first $n$ terms.


A probability  measure $P_n$ on $\Gamma_n$ is called \emph{central} if the probability of a path
$\rho_1\nearrow \rho_2\dots\nearrow\rho_n,$ depends only on $\rho_n$, i.e.\ all paths with fixed
endpoint $\rho_n$ are equiprobable.
Similarly,  a probability measure $P$ on $\Gamma$ is central if all its projections
$P_n=\hat{\pi}_n^\infty(P)$ on $\Gamma_n$ are central. Let $\mathcal M_C(\Gamma)$ denote the space
of all central measures on $\Gamma$. We remark that each $P\in\mathcal M_C(\Gamma)$ is associated
with a random walk which moves along the paths in $\mathcal R$ and has standard (not depending on
$P$) backward transition probabilities determined by the condition of centrality, see
\cite{Kerov_book} for more details.

Now Proposition \ref{prop_paths_are_permutations} implies the following statement.
\begin{proposition}
\label{prop_record_dep_is_central}
 The map $\widehat \Phi$ is a an affine isomorphism of the convex sets
 $\mathcal M_R(\O)$ and $\mathcal M_C(\Gamma)$.
\end{proposition}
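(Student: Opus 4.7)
The plan is to reduce the claim to the bijection established in Proposition \ref{prop_paths_are_permutations}, then verify that this bijection matches the record-dependence condition on one side with the centrality condition on the other side.

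First I would observe that since $\Phi:\O\to\Gamma$ is a homeomorphism, the pushforward $\widehat\Phi$ is automatically a bijection of the full spaces of Borel probability measures $\mathcal M(\O)\to\mathcal M(\Gamma)$. Affineness is also automatic, since pushforward of measures commutes with convex combinations: $\widehat\Phi(\lambda P + (1-\lambda)P')=\lambda\widehat\Phi(P)+(1-\lambda)\widehat\Phi(P')$. So the only substantive point is to check that $\widehat\Phi$ carries the subclass $\mathcal M_R(\O)$ onto the subclass $\mathcal M_C(\Gamma)$.

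To verify the forward inclusion I would fix $P\in\mathcal M_R(\O)$ and, for each $n$, look at the level-$n$ marginals. By Proposition \ref{prop_paths_are_permutations} the finite bijection $\Phi_n:\Sym_n\to\Gamma_n$ sends $\sigma$ to the path $(R(\pi_1^n\sigma),\dots,R(\pi_n^n\sigma))$, whose endpoint is precisely $R(\sigma)$. Therefore the pushforward measure $\widehat\Phi_n(P_n)$ assigns to each finite path the mass $P_n(\Phi_n^{-1}(\gamma))$. Record-dependence of $P_n$ means this mass depends only on $R(\Phi_n^{-1}(\gamma))$, i.e. only on the endpoint of $\gamma$, which is the definition of centrality of a level-$n$ measure. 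Since this holds for every $n$, $\widehat\Phi(P)$ is central.

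The reverse inclusion is the mirror image: given $Q\in\mathcal M_C(\Gamma)$, the level-$n$ marginal $Q_n$ is constant on paths sharing an endpoint, so its preimage under $\widehat\Phi_n$ assigns mass to $\sigma$ depending only on $R(\sigma)$, i.e. is RD on $\Sym_n$. Both directions together show $\widehat\Phi(\mathcal M_R(\O))=\mathcal M_C(\Gamma)$. There is no real obstacle here---the argument is essentially a bookkeeping of what the bijection $\Phi_n$ does on individual permutations, combined with the observation that the endpoint of $\Phi_n(\sigma)$ is $R(\sigma)$---which is why the paper says the statement is straightforward from the preceding proposition.
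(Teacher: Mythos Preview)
Your proposal is correct and is precisely the natural elaboration of what the paper leaves implicit: the paper gives no proof beyond the remark that the statement is ``straightforward from Proposition \ref{prop_paths_are_permutations}'', and your argument---that $\widehat\Phi$ is automatically an affine bijection of all measures, and that record-dependence matches centrality because the endpoint of $\Phi_n(\sigma)$ is $R(\sigma)$---is exactly the intended unpacking.
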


Let $\rho_n\in \R_n$. The  elementary measure $P^{\rho_n}$ is a unique central measure on
$\Gamma_n$ supported by the set of paths of length $n$ with endpoint $\rho_n$. Under the bijection
between $\Sym_n$ and $\Gamma_n$, the elementary measure corresponds to the uniform distribution on
the set of permutations $\sigma\in\Sym_n$ with fixed records $R(\sigma)=\rho_n$. The next standard
fact (see e.g.\ \cite[Proposition 10.8]{Olsh} or
\cite[Theorem 1.1]{DF}) is the main technical tool to identify the set
$\mathcal M_C(\Gamma)$.

\begin{lemma}
\label{theorem_approximation}
 Let $P$ be an extreme point of the convex set $\mathcal M_C(\Gamma)$.
 Then for $P$-almost all paths
 $\rho_1\nearrow\rho_2\nearrow\dots$ in  $\cal R$
 \begin{equation}
 \label{eq_approximation_theorem}
  \lim_{n\to\infty}\hat{\pi}_k^n( P^{\rho_n} )(A)= \hat{\pi}_k^\infty(P)(A),
 \end{equation}
for all $A\subset \Gamma_k$ and $k\in {\mathbb N}$.
\end{lemma}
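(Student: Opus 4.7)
The plan is to identify $\hat{\pi}_k^n(P^{\rho_n})(A)$ with a regular conditional probability and then apply the reverse (backward) martingale convergence theorem, using the fact that the tail $\sigma$-algebra is trivial under any extreme $P$. Set $\mathcal G_n := \sigma(\pi_m^\infty : m \ge n)$ and $\mathcal T := \bigcap_{n\ge 1} \mathcal G_n$. The decisive reformulation of centrality is: for $P \in \mathcal M_C(\Gamma)$ and every $n$, the conditional distribution of the initial segment $(\pi_1^\infty, \dots, \pi_{n-1}^\infty)$ given $\mathcal G_n$ is uniform on the length-$n$ paths terminating at $\pi_n^\infty$. Granting this,
\[
\hat{\pi}_k^n\bigl(P^{\pi_n^\infty}\bigr)(A) = P\bigl(\pi_k^\infty \in A \bigm| \mathcal G_n\bigr) \qquad P\text{-a.s.}, \ n \ge k.
\]
Since $(\mathcal G_n)_{n \ge k}$ is a decreasing filtration, the right side is a reverse martingale in $n$, and the reverse martingale convergence theorem gives $P$-a.s. convergence to $E\bigl[\mathbf{1}_{\pi_k^\infty \in A} \bigm| \mathcal T\bigr]$.

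It remains to show $\mathcal T$ is $P$-trivial when $P$ is extreme in $\mathcal M_C(\Gamma)$. Suppose $B \in \mathcal T$ with $0 < P(B) < 1$. Since $B \in \mathcal G_n$, the conditional uniformity above yields
\[
P\bigl((\pi_1^\infty, \dots, \pi_n^\infty) = (\rho_1, \dots, \rho_n) \bigm| B\bigr) = \frac{P(\pi_n^\infty = \rho_n \mid B)}{|\Gamma_n(\rho_n)|},
\]
where $|\Gamma_n(\rho_n)|$ denotes the number of length-$n$ paths to $\rho_n$. This depends on $(\rho_1, \dots, \rho_n)$ only through the endpoint, so $P(\cdot \mid B)$ is central; likewise $P(\cdot \mid B^c)$ is central. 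The resulting decomposition $P = P(B)\,P(\cdot \mid B) + P(B^c)\,P(\cdot \mid B^c)$ is nontrivial and contradicts extremeness. Hence the a.s.\ limit of $\hat{\pi}_k^n(P^{\pi_n^\infty})(A)$ equals $P(\pi_k^\infty \in A) = \hat{\pi}_k^\infty(P)(A)$, which is (\ref{eq_approximation_theorem}).

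The main technical obstacle is the reformulation of centrality as a statement about conditioning on the entire tail $\mathcal G_n$ rather than on the single-vertex $\sigma$-algebra $\sigma(\pi_n^\infty)$: one must verify that the conditional law of $(\pi_1^\infty,\dots,\pi_{n-1}^\infty)$ given $\mathcal G_n$ remains uniform over length-$n$ paths with the prescribed endpoint $\pi_n^\infty$, i.e.\ that the past depends on the future only through $\pi_n^\infty$. This Markov-type property is essentially built into centrality --- each finite-dimensional marginal of $P$ weights a path only by its endpoint at the top level --- but writing it out carefully for the projective-limit measure $P$, so that conditional probabilities can be safely manipulated along the decreasing filtration, is the step deserving most attention.
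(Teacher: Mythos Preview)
Your argument is correct and is precisely the classical reverse-martingale/tail-triviality proof of this ``approximation theorem''; the paper does not give its own proof here but simply cites it as a standard fact (see the references to \cite[Proposition~10.8]{Olsh} and \cite[Theorem~1.1]{DF}), and your write-up follows the Diaconis--Freedman argument essentially verbatim. The one point you flag as deserving care---that centrality upgrades from conditioning on $\sigma(\pi_n^\infty)$ to conditioning on $\mathcal G_n$---is indeed the only place where something must be checked, and your sketch of why it holds (the finite-dimensional marginals weight a path only through its top vertex, hence the future beyond $\rho_n$ carries no further information about the past) is the right justification.
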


The family of probability measures $P\in {\mathcal M}_C({\mathcal R})$ representable as limits
(\ref{eq_approximation_theorem}) of elementary measures along some paths $(\rho_n)$ is
called the  \emph{ Martin boundary} of the graph $\R$. (We remark that sometimes the Martin
boundary is defined as a larger  set of limits along arbitrary sequences
$(\rho_n)$.) By Lemma \ref{theorem_approximation} the set of extremes ${\rm ext}\,\mathcal
M_C(\Gamma)$ is a part of the Martin boundary. Convergence (\ref{eq_approximation_theorem}) is the
same as the weak convergence of projections on every $\Gamma_k$. With this  in mind we simply
write  $P^{\rho_n}\to P$. The boundary problem has a straightforward  reformulation in terms of
permutations.


\section{The Martin boundary identification}
\label{Section_Martin_boundary}


\begin{theorem}
\label{theorem_Martin_boundary}
 Let $(\rho_n)\in\Gamma$ be a path such that
the elementary RD-measures on $\Sym_{n}$ weakly converge,  i.e.\ $P^{\rho_n}\to P.$ Then
$P=P^\omega$ for some  $\omega\in\Omega$. Thus, the Martin boundary of $\mathcal R$ can be
identified with
 $\Omega$.
\end{theorem}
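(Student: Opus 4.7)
The plan is to extract parameters $\omega \in \Omega$ from the path $(\rho_n)$, compute finite-dimensional marginals of $P^{\rho_n}$ explicitly in rank coordinates, and match them with those of $P^\omega$ in the limit; uniqueness of weak limits then forces $P = P^\omega$.

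Let $\beta_1(n) < \beta_2(n) < \dots$ enumerate the non-record positions of $\rho_n$ (the 0's in the binary word), and let $M_n := \max\{j : \rho_n(j) = 1\}$; note that every position in $(M_n, n]$ is automatically a non-record. By a diagonal-extraction argument, weak convergence of $P^{\rho_n}$ forces $\beta_k(n) \to \beta_k^\infty \in \{2,3,\dots\}\cup\{+\infty\}$ for each $k$, and also $M_n/n \to p \in [0,1]$, these limits being statistically pinned down by $P$ through the computation below (in particular, the product identity $\prod_{j=M_n+1}^n (1-\tfrac{1}{j-1}) = (M_n-1)/(n-1) \to p$ extracts $p$ from the tail factor in the law of $\sigma_n^{-1}(1)$). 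Set $\alpha_k := \beta_k^\infty - 1$; if $\alpha$ satisfies (\ref{alpha-finite}) and $p > 0$, put $\omega := (\alpha,p) \in \Omega$, otherwise put $\omega := *$.

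The key computation uses rank coordinates: under $P^{\rho_n}$ the ranks $r_1,\dots,r_n$ are independent with $r_i = i$ at record positions and $r_i$ uniform on $[i-1]$ at non-record positions. Since $\sigma_n^{-1}(1)$ equals the largest index $i$ with $r_i = 1$, we obtain
\[
P^{\rho_n}\bigl(\sigma_n^{-1}(1) = \beta_\ell(n)\bigr) = \frac{1}{\beta_\ell(n)-1}\prod_{k>\ell}\biggl(1-\frac{1}{\beta_k(n)-1}\biggr),\qquad P^{\rho_n}\bigl(\sigma_n^{-1}(1) = 1\bigr) = \prod_{k}\biggl(1-\frac{1}{\beta_k(n)-1}\biggr),
\]
which are the finite-$n$ analogues of (\ref{rule-inverse1})--(\ref{rule-inverse2}). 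Conditional on $\sigma_n^{-1}(1)$, the residual permutation is uniform on permutations of $[n-1]$ with a reduced record set (delete the occupied position and re-index), so the computation iterates in lock-step with the dual algorithm of Section 2, furnishing the joint law of $(\sigma_n^{-1}(j))_j$.

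Passing to $n \to \infty$: in the summable case ($\sum 1/\alpha_k < \infty$, $p > 0$), term-by-term convergence of the displayed formulas gives joint convergence of the ``early'' inverse positions (those that stay bounded) to the $P^{(\alpha,1)}$-dual process, whereas the ``diffusing'' elements (inverse positions exceeding $M_n$) form an asymptotically exchangeable block by the conditional symmetry of uniform measures on permutations with prescribed records; together these match the $P^{(\alpha,1)}\text{-block} + P^*\text{-block}$ construction of $P^{(\alpha,p)}$ in proportion $p:(1-p)$. In the degenerate case every fixed element diffuses and $P = P^*$. The main obstacle is verifying this early/late decomposition rigorously---one must show that, conditional on the early skeleton and on which elements escape beyond $M_n$, the positions of the escaped elements are asymptotically uniform over $(M_n, n]$, which is where the symmetry of uniform measures on fixed-records permutations is essential; once this is established, compactness of $\Omega$ and uniqueness of the weak limit promote the subsequential parameter extraction to full convergence and identify $P = P^\omega$.
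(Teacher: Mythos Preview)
Your approach is essentially the same as the paper's: compute the law of $\sigma_n^{-1}(1),\sigma_n^{-1}(2),\ldots$ under $P^{\rho_n}$ (your rank computation reproduces Propositions~\ref{prop_pos_1_finite}--\ref{prop_pos234_finite}) and match it term by term with the dual algorithm of Section~\ref{Section_Statement} that defines $P^{(\alpha,p)}$. The paper extracts the real parameter via $L(\rho_n)=\prod_i(1-1/(\ell_i-1))$ and then sets $p=p_1/p_2$, whereas you read off $p=\lim M_n/n$ directly from the telescoping tail factor; these are equivalent since $L(\rho_n)$ factors as (product over stable zeros)$\times (M_n-1)/(n-1)$. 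For the convergence of the $\beta_k(n)$ the paper gives a direct path-structure argument (Proposition~\ref{prop_convergence_to_non_uniform}: if $\rho_n(i)$ oscillates then along a subsequence $\rho_{n_j}$ ends in a long run of zeros, forcing $L(\rho_{n_j})\to 0$), while you rely on subsequential extraction plus the injectivity of $\omega\mapsto P^\omega$ to upgrade to full convergence---this works, but note that injectivity is what Proposition~\ref{prop_law_of_large_numbers} supplies, so your argument quietly borrows from Section~\ref{Section_laws_of_large_numbers}.
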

\noindent
To prove the result we need a number of auxiliary propositions.

 For  $\rho\in\R_n$ with $k$ zeros let  $\ell_1(\rho)<\ell_2(\rho)<\dots<\ell_k(\rho)$ be the positions of zeros
 listed in increasing order,
 $$
  \{\ell_1(\rho),\dots,\ell_k(\rho)\}=\{i:\rho(i)=0\},
 $$
and define
 $$
  L(\rho):=\prod_{i=1}^k\left(1-\frac{1}{\ell_i(\rho)-1}\right).
 $$

The following algorithm produces a random permutation with distribution $P^{\rho}$. Let
$m_1>m_2>\dots>m_{n-k}=1$ be the positions of 1's listed in decreasing order. Since
$\sigma^{-1}(n)$ is the largest record we must have $\sigma(m_1)=n$. Next,
$\sigma(m_1+1),\dots,\sigma(n)$ is an equiprobable sample without replacement from
$\{1,\dots,n-1\}=\{1,\dots,n\}\setminus \{\sigma(m_1)\}$. Furthermore, $\sigma(m_2)$ is the
maximal element of $\{1,\dots,n\}\setminus\{\sigma(m_1),\dots,\sigma(n)\}$, thus there is only one
choice for $\sigma(m_2)$ after $\sigma(m_1),\dots,\sigma(n)$ have been determined. Now
$\sigma(m_3+1),\dots,\sigma(m_2-1)$ is an equiprobable sample from
$\{1,\dots,n\}\setminus\{\sigma(m_1),\dots,\sigma(n)\}\setminus\{\sigma(m_2)\}$. The process is
continued until all positions are filled.

The above algorithm for sampling permutations from  $P^{\rho}$ readily implies the following.
\begin{proposition}
\label{prop_pos_1_finite} Let $\rho\in\R_n$ and let $\sigma$ be a random permutation from $\Sym_n$
distributed according to $ P^{\rho}$. The random variable $\sigma^{-1}(1)$ has the
following distribution:
$$
  P^{\rho}(\sigma^{-1}(1)=h)=\begin{cases}
  \prod_{i=1}^k \left(1-\frac{1}{\ell_i-1}\right),\quad{\rm if ~~} h=1,\\
  \frac{1}{\ell_j-1} \prod_{i=j+1}^k \left(1-\frac{1}{\ell_i-1}\right),\quad{\rm if}~~ h= \ell_j,\\
  0,\quad\text{\rm otherwise.}
 \end{cases}
$$
\end{proposition}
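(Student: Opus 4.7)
The plan is to exploit the rank characterization of $P^\rho$ stated earlier in the paper: under $P^\rho$, the ranks $r_1,\dots,r_n$ are independent, with $r_i\equiv i$ for $i\in\rho$ and $r_i$ uniformly distributed on $[i-1]$ for $i\notin\rho$. With this in hand, the proposition reduces to an elementary calculation with independent random variables, modulo one bijective observation linking the event $\{\sigma^{-1}(1)=h\}$ to the ranks.

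The observation I would prove first is: for any $\sigma\in\Sym_n$, one has $\sigma(h)=1$ if and only if $r_h=1$ and $r_j\ne 1$ for every $j>h$. Indeed, $r_j=1$ says precisely that $\sigma(j)$ is a new running minimum among $\sigma(1),\dots,\sigma(j)$. So the condition ``$r_h=1$ and $r_j\ne 1$ for $j>h$'' says that position $h$ is the last time the running minimum decreases, and therefore $\sigma(h)$ must equal the global minimum $1$; the converse is immediate.

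Then I would feed this characterization into the product structure of $P^\rho$. For $i\in\rho$ with $i>1$ one has $r_i=i\ge 2$ almost surely, so $r_i\ne 1$ holds automatically; while for $i=\ell_j\notin\rho$ one has $P^\rho(r_{\ell_j}=1)=\tfrac{1}{\ell_j-1}$ and $P^\rho(r_{\ell_j}\ne 1)=1-\tfrac{1}{\ell_j-1}$. Consequently: if $h=1$, the event reduces to $r_{\ell_i}\ne 1$ for all $i=1,\dots,k$, giving the product $\prod_{i=1}^{k}(1-\tfrac{1}{\ell_i-1})$; if $h=\ell_j$, multiply $P^\rho(r_{\ell_j}=1)$ with the tail probability $\prod_{i=j+1}^{k}P^\rho(r_{\ell_i}\ne 1)$; and if $h\in\rho\setminus\{1\}$, the deterministic value $r_h=h>1$ makes the event impossible, so the probability is $0$. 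These three outcomes are precisely the three cases of the proposition.

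There is essentially no obstacle, but one subtlety worth noting is the degenerate case $\ell_1=2$: the factor $1-\tfrac{1}{\ell_1-1}$ then vanishes and correctly forces $P^\rho(\sigma^{-1}(1)=1)=0$, consistent with the fact that position $2$ cannot simultaneously fail to be a record and have $\sigma(1)=1$. One could alternatively derive the formula directly from the block-by-block sampling algorithm stated just before the proposition, by tracking the block in which the value $1$ first lands; but the rank-based argument is markedly cleaner because the ranks decouple the event of interest across the independent coordinates.
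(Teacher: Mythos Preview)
Your proof is correct. The paper does not spell out an argument but simply asserts that the formula is ``readily implied'' by the block-by-block sampling algorithm described immediately before the proposition (fill positions right-to-left starting from the last record, placing the current maximum at each record and a uniform sample elsewhere). You instead invoke the rank-coordinate description of $P^\rho$ given in Section~1 together with the observation that $\sigma^{-1}(1)$ is the last index $h$ with $r_h=1$; independence of the ranks then gives the product formula in one line. The two routes are equivalent in content, but yours is tidier: the algorithm-based derivation requires tracking in which block the value $1$ first lands and telescoping conditional probabilities block by block, whereas the rank argument avoids any such bookkeeping. You already flag this comparison yourself, and your remark on the degenerate case $\ell_1=2$ is apt.
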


\begin{proposition}
\label{prop_pos234_finite} Let $\rho\in\R_n$, and let $\sigma$ be a random permutation from $\Sym_n$
distributed according to $ P^{\rho}$.  For $2\le t\le n$
the conditional distribution of
 $\sigma^{-1}(t)$ given
$\sigma^{-1}(1)=s_1$, \dots, $\sigma^{-1}(t-1)=s_{t-1}$  is
\begin{multline*}
 { P}^{\rho}(\sigma^{-1}(t)=h\mid \sigma^{-1}(1)=s_1, \dots,
\sigma^{-1}(t-1)=s_{t-1} )=\\
\begin{cases}
\prod_{i=1}^{k'} \left(1-\frac{1}{\ell'_i-1-w(\ell'_i)}\right)   ,
\quad{\rm if ~~} h=\min(\{1,\dots,n\}\setminus\{s_1,\dots,s_{t-1}\}),\\
\frac{1}{\ell'_j-1-w(\ell'_j)} \prod_{i=j+1}^{k'}
\left(1-\frac{1}{\ell'_i-1-w(\ell'_j)}\right),\quad{\rm if~~} h= \ell'_j,\\
0,\quad\text{ {\rm otherwise,}}
\end{cases}
\end{multline*}
where  $\ell'_1<\dots<\ell'_{k'}$ satisfy
$$
 \{\ell'_1,\dots,\ell'_{k'}\}=\{\ell_1,\dots,\ell_k\}\setminus\{s_1,\dots,s_{t-1}\}
$$
and
$$
w(x)=\left|\{s_1,\dots,s_{t-1}\}\cap \{1,\dots,x-1\}\right|.
$$
\end{proposition}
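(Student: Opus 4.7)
The plan is to reduce the statement to Proposition \ref{prop_pos_1_finite} applied to a smaller problem of size $n-t+1$. First I would condition on $\sigma^{-1}(1)=s_1,\dots,\sigma^{-1}(t-1)=s_{t-1}$ and consider the restriction of $\sigma$ to the remaining set of positions $Q:=[n]\setminus\{s_1,\dots,s_{t-1}\}$. Order-preserving relabelings $Q\to[n-t+1]$ and $\{t,t+1,\dots,n\}\to[n-t+1]$ turn this restriction into a permutation $\tilde\sigma\in\Sym_{n-t+1}$.

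The key observation is that for any $j\in Q$ and any $s_i<j$ we have $\sigma(s_i)\le t-1<t\le\sigma(j)$, so the deleted values can never contribute to the record condition at $j$. Hence a position $j\in Q$ is a record of $\sigma$ iff it is a record of the restriction, and after relabeling the record set $\tilde\rho\in\R_{n-t+1}$ of $\tilde\sigma$ is exactly $\rho$ with the symbols at positions $s_1,\dots,s_{t-1}$ deleted. In particular, the positions of the zeros of $\tilde\rho$ are precisely $\ell'_1-w(\ell'_1)<\cdots<\ell'_{k'}-w(\ell'_{k'})$.

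Next I would verify that the conditional law of $\tilde\sigma$ is $P^{\tilde\rho}$. The map $\sigma\mapsto\tilde\sigma$ is a bijection from $\{\sigma\in\Sym_n:R(\sigma)=\rho,\ \sigma^{-1}(i)=s_i\text{ for }i<t\}$ onto $\{\tau\in\Sym_{n-t+1}:R(\tau)=\tilde\rho\}$, its inverse simply placing $i$ at $s_i$ for $i<t$ and filling the remaining slots according to $\tau$ under the inverse relabeling. Since $P^\rho$ is uniform on its support, the pushforward under this bijection is uniform on $\{\tau:R(\tau)=\tilde\rho\}$, i.e.\ coincides with $P^{\tilde\rho}$.

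Finally I would invoke Proposition \ref{prop_pos_1_finite} for the reduced problem to read off the distribution of $\tilde\sigma^{-1}(1)$, then pull back: $\tilde\sigma^{-1}(1)=1$ corresponds to $\sigma^{-1}(t)=\min Q$, while $\tilde\sigma^{-1}(1)=\ell'_j-w(\ell'_j)$ corresponds to $\sigma^{-1}(t)=\ell'_j$. Substituting $\ell'_i-w(\ell'_i)$ for $\ell_i$ in the formula of Proposition \ref{prop_pos_1_finite} produces exactly the probabilities claimed. The main obstacle is the bookkeeping in the second step, i.e.\ verifying the record-set correspondence and confirming that the bijection preserves uniformity; once that is pinned down, the rest is a direct substitution into the already-established $t=1$ case.
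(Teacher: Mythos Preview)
Your proposal is correct and is essentially a careful formalisation of what the paper leaves implicit: the paper gives no separate proof of this proposition, stating only that it (together with Proposition~\ref{prop_pos_1_finite}) ``readily'' follows from the sampling algorithm described just before. That algorithm is precisely the recursive structure you exploit---once the values $1,\dots,t-1$ have been placed at $s_1,\dots,s_{t-1}$, the remaining values $t,\dots,n$ on the positions $Q=[n]\setminus\{s_1,\dots,s_{t-1}\}$ are (after the order-preserving relabeling) distributed as $P^{\tilde\rho}$, and your bijection argument makes this explicit. One small point worth recording in your write-up: the record status of each deleted position $s_i$ is determined solely by $(s_1,\dots,s_{t-1})$ and not by the filling $\tau$ of $Q$ (since any position in $Q$ below $s_i$ carries a value $\ge t>i$), which is what guarantees that the inverse of your bijection indeed lands in $\{R(\sigma)=\rho\}$. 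Also note that your substitution yields $w(\ell'_i)$ inside the product, whereas the displayed formula in the statement has $w(\ell'_j)$; the former is what the reduction gives and is the intended expression.
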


\begin{proposition}
\label{prop_conditions_for_convergence_to_uniform}
 If a  path $(\rho_n)\in \Gamma$ satisfies $L(\rho_n)\to 0$, then
$P^{\rho_n}\to P^*.$
\end{proposition}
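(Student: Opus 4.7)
The plan is to prove the weak convergence $\hat{\pi}_k^n(P^{\rho_n})\to\hat{\pi}_k^\infty(P^*)$ on $\Sym_k$ for every fixed $k$ by showing that under $P^{\rho_n}$ the joint law of the rescaled vector $(n^{-1}\sigma^{-1}(1),\dots,n^{-1}\sigma^{-1}(k))$ converges to $k$ i.i.d.\ uniforms on $[0,1]$. Since $\pi_k^n(\sigma)$ records the relative order of $\sigma^{-1}(1),\dots,\sigma^{-1}(k)$ and the same joint limit holds under $P^*$, the continuous mapping theorem (applied to the a.s.\ continuous ordering map on $[0,1]^k$) will then give $\hat{\pi}_k^n(P^{\rho_n})\Rightarrow\hat{\pi}_k^\infty(P^*)$, which is the definition of $P^{\rho_n}\to P^*$.

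First I would handle the marginal of $\sigma^{-1}(1)$. Writing $u_j:=1/(\ell_j(\rho_n)-1)$ and $u_j=1-(1-u_j)$ inside the sum given by Proposition \ref{prop_pos_1_finite} telescopes to the clean formula
\[
P^{\rho_n}\bigl(\sigma^{-1}(1)\leq h\bigr)=\prod_{\ell_i(\rho_n)>h}\left(1-\frac{1}{\ell_i(\rho_n)-1}\right),\qquad h\in\{1,\dots,n\}.
\]
For $x\in(0,1)$, setting $h=\lfloor xn\rfloor$ and factoring $L(\rho_n)$ as the product over $\ell_i\leq xn$ times the tail product over $\ell_i>xn$, I would combine the hypothesis $L(\rho_n)\to 0$ with the path constraint (the edge rule $\rho_n\nearrow\rho_{n+1}$ forces the non-record positions of $\rho_n$ to be consistent as $n$ grows) to argue that the tail product converges to $x$, so that $\sigma^{-1}(1)/n\Rightarrow U_1\sim\mathrm{Unif}[0,1]$. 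For the inductive step on $t$, Proposition \ref{prop_pos234_finite} writes the conditional law of $\sigma^{-1}(t)$ given $\sigma^{-1}(1),\dots,\sigma^{-1}(t-1)$ in the same shape as Step 1 but with the $\ell_i$ replaced by the deleted list $\ell'_i$ and a shift $w$. Since removing finitely many factors does not destroy $L(\rho_n)\to 0$, the same telescoping plus asymptotic analysis shows that conditionally on $(n^{-1}\sigma^{-1}(1),\dots,n^{-1}\sigma^{-1}(t-1))\to(u_1,\dots,u_{t-1})$ the normalized $\sigma^{-1}(t)/n$ converges to an independent $\mathrm{Unif}[0,1]$, and induction closes the joint limit.

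The hard part will be Step 1: extracting, from the single scalar condition $L(\rho_n)\to 0$, uniform enough control of the tail products $\prod_{\ell_i>xn}(1-1/(\ell_i-1))\to x$ simultaneously for all $x\in(0,1)$. The path structure (not just the value of $L$) is what rules out degenerate configurations in which the logarithmic mass of the factors concentrates in a narrow band of positions, so the proof will have to leverage the edge rule $\rho_n\nearrow\rho_{n+1}$ to propagate regularity of the non-record positions across $n$. I expect the bulk of the technical work to lie precisely in this propagation argument.
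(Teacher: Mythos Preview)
Your plan has a genuine gap: the intermediate claim that $n^{-1}\sigma^{-1}(1)\Rightarrow\mathrm{Unif}[0,1]$ (and then that the rescaled positions become i.i.d.\ uniform) is \emph{false} for some paths satisfying the hypothesis, so the ``hard Step~1'' you flag cannot be completed. Take the path with $\rho_{2^j}=10\cdots0$ (edge with $k=1$) and $\rho_n=\rho_{n-1}\cdot 1$ for $2^j<n<2^{j+1}$ (edge with $k=n$). This is a legitimate element of $\Gamma$, and $\ell_1(\rho_n)=2$ for every $n\ge2$, so $L(\rho_n)\equiv0$. Along $n=2^{j+1}-1$ one has $\rho_n=1\,0^{2^j-1}1^{2^j-1}$, and your telescoped formula gives $P^{\rho_n}(\sigma^{-1}(1)\le h)=(h-1)/(2^j-1)$ for $2\le h\le 2^j$ and $=1$ beyond; hence $\sigma^{-1}(1)/n\Rightarrow\mathrm{Unif}[0,\tfrac12]$ along this subsequence, while along $n=2^j$ the limit is $\mathrm{Unif}[0,1]$. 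Thus $\sigma^{-1}(1)/n$ does not even converge. The edge rule $\rho_n\nearrow\rho_{n+1}$ therefore does \emph{not} prevent the non-record positions from concentrating in a band of width $cn$ with $c<1$, and the ``propagation argument'' you anticipate cannot exist. Note that for this very path the proposition does hold (one checks directly that $\pi_m^n(\sigma)$ is exactly uniform for large $n$), so the failure is in your route, not in the statement.

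The paper bypasses rescaled positions entirely via a one-line symmetry. If $\sigma^{-1}(1),\dots,\sigma^{-1}(m)$ all lie outside $R(\sigma)$, then permuting the values $1,\dots,m$ among those positions leaves $R(\sigma)$ unchanged (each such position is dominated by an earlier record position, whose value must exceed $m$). Since $P^{\rho_n}$ is uniform on $\{R(\sigma)=\rho_n\}$, it follows that $\pi_m^n(\sigma)$ is \emph{exactly} uniform on $\Sym_m$ conditionally on the event $\{\sigma^{-1}(1),\dots,\sigma^{-1}(m)\}\subset[n]\setminus R(\sigma)$. It then suffices to show this event has probability tending to $1$, which the paper reads off from Propositions~\ref{prop_pos_1_finite} and~\ref{prop_pos234_finite}: the conditional probability that $\sigma^{-1}(t)$ lands at a record position is again a product of the same shape as $L$, and $L(\rho_n)\to0$ drives it to zero. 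No control of where the positions sit on the scale of $n$ is needed; only that they avoid the record set.
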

\begin{proof} Observe that
if for $\sigma\in\Sym_n$ positions of $1,\dots,m$ are not records, i.e.\ if
$\{\sigma^{-1}(1),\dots,\sigma^{-1}(m)\} \subset[n]\setminus R(\sigma)$, then the set of records
remains unaltered when the positions of $1,\dots,m$ are exchanged. Therefore, under the RD-measure
$P^{\rho_n}$  the permutation
 $\pi^n_m(\sigma)$ is uniformly distributed given
 $\{\sigma^{-1}(1),\dots,\sigma^{-1}(m)\}\subset [n]\setminus R(\sigma)$.
Finally,
by Propositions \ref{prop_pos_1_finite} and \ref{prop_pos234_finite}
 if $L(\rho_n)\to 0$, then
$$
 P^{\rho_n}(\sigma^{-1}(m)\notin R(\sigma))\to 1,
$$
hence $\hat{\pi}_m^n(P^{\rho_n})$ converges to the uniform distribution on ${\mathfrak S}_m$,
for every $m$.
\end{proof}

\begin{proposition}
\label{prop_convergence_to_non_uniform} Let $(\rho_n) \in\Gamma$ be a path such that
$P^{\rho_n}\to P$ and $P\neq P^*$. Then there exists a $0\!-\!1$
 sequence $\rho_{\infty}=(\rho_\infty(1),\rho_{\infty}(2),\dots)$ such that
 $$
  \lim_{n\to\infty} \rho_n(i)=\rho_\infty(i)
 $$
 for every $i$.
\end{proposition}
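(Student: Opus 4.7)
My plan is to reduce the statement to a short combinatorial fact about the edges of~$\mathcal R$, using Proposition~\ref{prop_conditions_for_convergence_to_uniform} only through its contrapositive. The main observation is that an edge $\rho_{n-1} \nearrow \rho_n$ of small index forces $L(\rho_n)$ to drop quantitatively, so once $L(\rho_n)$ is bounded away from zero such edges can only occur for bounded~$n$.

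The first step would be to upgrade the hypothesis to the quantitative statement $\liminf_{n} L(\rho_n) > 0$. If some subsequence $(n_j)$ satisfied $L(\rho_{n_j}) \to 0$, then the proof of Proposition~\ref{prop_conditions_for_convergence_to_uniform}---which only uses the values $L(\rho_n)$ via the formulas of Propositions~\ref{prop_pos_1_finite} and~\ref{prop_pos234_finite}---applies verbatim along that subsequence and yields $P^{\rho_{n_j}} \to P^*$. But $P^{\rho_n} \to P$, so every subsequence also converges to~$P$, whence $P = P^*$, contradicting the hypothesis. Fix $c \in (0,1]$ and $N$ with $L(\rho_n) \ge c$ for every $n \ge N$.

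The second step is a direct computation with the edge rule. If the edge $\rho_{n-1} \nearrow \rho_n$ has index $k \in [n]$ with $k < n$, then conditions~(ii)--(iii) of the edge definition force $\rho_n(m) = 0$ for every $m \in \{k+1, \ldots, n\}$, so
\[
L(\rho_n) \le \prod_{m=k+1}^{n}\!\Bigl(1 - \tfrac{1}{m-1}\Bigr) = \frac{k-1}{n-1}
\]
by telescoping. Fix $i \ge 1$ and take any $n \ge N$ with $n > 1 + (i-1)/c$. An edge of index $k \le i$ at such a step would give $L(\rho_n) \le (i-1)/(n-1) < c$, contradicting the lower bound on~$L(\rho_n)$; the remaining case $k = n$ also satisfies $k > i$ because $n > i$. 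Hence condition~(i) of the edge definition gives $\rho_n(i) = \rho_{n-1}(i)$, so $\rho_n(i)$ is eventually constant. Setting $\rho_\infty(i)$ equal to this common value (and $\rho_\infty(1) = 1$ trivially) produces the required $0$-$1$ sequence.

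The only delicate point is the first step, where Proposition~\ref{prop_conditions_for_convergence_to_uniform} must be applied along a possibly bad subsequence rather than the full sequence; because its proof hinges only on the numerical values $L(\rho_n)$ and not on any path structure, this causes no real difficulty, and the remainder is a direct calculation in the graph~$\mathcal R$.
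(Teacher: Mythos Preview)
Your proof is correct and follows essentially the same route as the paper's: both hinge on the observation that an edge $\rho_{n-1}\nearrow\rho_n$ of index $k\le i$ forces $\rho_n(m)=0$ for all $m>k$, hence $L(\rho_n)\le (k-1)/(n-1)$, and both invoke Proposition~\ref{prop_conditions_for_convergence_to_uniform} along a subsequence to exclude this when $P\neq P^*$. The only difference is organizational---the paper argues by contradiction on the convergence of $\rho_n(i)$ directly, whereas you first extract the bound $\liminf_n L(\rho_n)>0$ and then deduce eventual stabilization---but the underlying mechanism is identical.
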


\begin{proof}
Suppose that for some $i$ the sequence $\rho_n(i)$ does not converge. Then for infinitely
many $n_j$ we have $\rho_{n_j}(i)=0$ and $\rho_{n_j-1}(i)=1$. Then, since
$\rho_{n_j-1}\nearrow \rho_{n_j}$, we have
$$\rho_{n_j}(i)=\rho_{n_j}({i+1})=\dots=\rho_{n_j}({n_j})=0.
$$
Therefore, $L(\rho_{n_j})\to 0$ as $j\to\infty$ and Proposition
\ref{prop_conditions_for_convergence_to_uniform} implies that
$\widehat \pi^{n_j}_k( P^{\rho_{n_j}})$ converges to the uniform measure on $\Sym_k$,
so $P^{\rho_{n_j}}\to P^*$ which is a contradiction.
\end{proof}

Now we are ready to prove Theorem \ref{theorem_Martin_boundary}.
\begin{proof}[Proof of Theorem \ref{theorem_Martin_boundary}]
If $P^{\rho_n}\to P^*$ then there is nothing to prove. Otherwise by Propositions
\ref{prop_conditions_for_convergence_to_uniform} and \ref{prop_convergence_to_non_uniform}, passing if necessary to a subsequence, we have as $n\to\infty$
 \begin{enumerate}
 \item $L(\rho_n)\to p_1$ for some $0<p_1\leq 1$,
 \item $\lim_{n\to\infty} \rho_n(i)=\rho_\infty(i)$ for some
$0-1$ sequence $\rho_\infty=(\rho_\infty(1),\rho_\infty(2),\dots)$.
 \end{enumerate}

Let $\ell_1(\rho_{\infty})<\ell_2(\rho_{\infty})<\dots$ be positions of zeros
 in $\rho_{\infty}$:
 $$
  \{\ell_1(\rho_\infty),\ell_2(\rho_{\infty}),\dots\}=\{i:\rho_{\infty}(i)=0\}.
 $$
 Set $\alpha_i=\ell_i(\rho_\infty)-1$ for all $i$ such that $\ell_i(\rho_\infty)$ is defined.
Observe that convergence of $L(\rho_n)$ entails that
 $\prod_i(1-1/\alpha_i)$ converges to some $p_2$  with $p_1\le p_2\le 1$. Now set $p=p_1/p_2$. We
 claim that
 $$
 P_k=\widehat \pi^\infty_k(P^{(\alpha,p)}).
 $$
 The claim is shown by comparing the description of $P^{(\alpha,p)}$
 via the dual algorithm given in Section \ref{Section_Statement}) with  the description of
the elementary measures $P^\rho$ given in Propositions \ref{prop_pos_1_finite} and
 \ref{prop_pos234_finite}.
\end{proof}

\begin{corollary}
\label{corollary_record_dependent}
 The measures $P^\omega$, $\omega\in\Omega$, are record-dependent.
\end{corollary}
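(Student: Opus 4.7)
My plan is to deduce the corollary directly from the Martin boundary identification established in Theorem \ref{theorem_Martin_boundary}, together with the fact that the space of RD-measures on each $\Sym_k$ is closed under weak limits.

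For $\omega=*$, the measure $P^*$ projects by construction to the uniform distribution on each $\Sym_n$, which is trivially record-dependent. So I only need to treat the case $\omega=(\alpha,p)\in\Omega$.

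For the latter I would invoke the content of the proof of Theorem \ref{theorem_Martin_boundary}, which exhibits $\hat{\pi}_k^\infty(P^{(\alpha,p)})$ as the weak limit, along some subsequence $n_j\to\infty$, of projections $\hat{\pi}^{n_j}_k(P^{\rho_{n_j}})$ for an appropriate path $(\rho_n)\in\Gamma$. By definition each elementary measure $P^{\rho_n}$ is record-dependent---it is uniform on the set of permutations with record set $\rho_n$---and Lemma \ref{prop_stability_of_record_dependence} says that the projections $\hat{\pi}^n_k$ map RD-measures to RD-measures. Hence every term of the sequence $\hat{\pi}^{n_j}_k(P^{\rho_{n_j}})$ lies in $\mathcal M_R(\Sym_k)$. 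But $\mathcal M_R(\Sym_k)$ is cut out by finitely many linear equalities (asserting that $P_k(\sigma)=P_k(\sigma')$ whenever $R(\sigma)=R(\sigma')$) inside the finite-dimensional probability simplex on $\Sym_k$, so it is closed. The defining conditions therefore pass to the weak limit, placing $\hat{\pi}_k^\infty(P^{(\alpha,p)})$ in $\mathcal M_R(\Sym_k)$. Since this holds for every $k$, we conclude $P^{(\alpha,p)}\in\mathcal M_R(\mathcal O)$.

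The ``obstacle'' here is only conceptual rather than technical: attempting to verify record-dependence directly from the rank-sequence construction of $P^{(\alpha,p)}$ given in Section \ref{Section_Statement} would be unpleasant because the projections $\pi^n_{n-1}$ do not interact cleanly with rank coordinates (as explicitly noted in the Remark after Lemma \ref{prop_stability_of_record_dependence}), so one would have to chase the deletion-insertion combinatorics by hand. The Martin boundary viewpoint circumvents this entirely by presenting $P^{(\alpha,p)}$ as a weak limit of manifestly record-dependent elementary measures, after which closedness of $\mathcal M_R(\Sym_k)$ does the remaining work.
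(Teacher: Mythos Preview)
Your argument is correct and follows essentially the same route as the paper's proof, which is a one-line appeal to Theorem \ref{theorem_Martin_boundary}: the $P^\omega$ are weak limits of the record-dependent elementary measures $P^{\rho_n}$, and the RD property passes to weak limits. You have simply unpacked this in more detail---making explicit that $\mathcal M_R(\Sym_k)$ is closed as a subset of a finite-dimensional simplex cut out by linear equalities, and treating the case $\omega=*$ separately (which is harmless but unnecessary, since $P^*$ is also covered by the Martin boundary identification).
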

\begin{proof}
Indeed, by Theorem \ref{theorem_Martin_boundary} they are weak limits of record-dependent
measures.
\end{proof}

\section{The laws of large numbers}
\label{Section_laws_of_large_numbers}

In this section we exploit the algorithmic description of measures $P^{(\alpha,p)}$ to prove
Proposition \ref{prop_law_of_large_numbers} and to finish the proof of Theorem
\ref{theorem_extreme_points}.

First, suppose that $p=1$ and fix a sequence $\alpha$ such that $\sum_{i=1}^\infty 1/\alpha_i<\infty$.
Recall, that the dual algorithm for $P^{(\alpha,1)}$ constructs successively the entries
$\sigma^{-1}(1),\sigma^{-1}(2),\dots$
of the inverse permutation
$\sigma^{-1}:\mathbb N\to\mathbb N$.
\begin{lemma}
\label{lemma_estimate_in_algorithm}
 For every $\varepsilon>0$ there exist constants $C>1$ and $n_0$
such that the estimate
 \begin{equation}\label{condpro}
  {P^{(\alpha,1)}}(\sigma^{-1}(k)>Cn\mid \sigma^{-1}(1)=s_1,\dots,\sigma^{-1}(k-1)=s_{k-1})<\varepsilon.
\end{equation}
holds for $n>n_0$, $k\leq n$ and arbitrary distinct  $s_1,\dots,s_{k-1}$.

\end{lemma}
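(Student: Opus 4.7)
My plan is to reduce the statement to a tail estimate on the random index $y_k$ produced by the dual algorithm, then control that tail by exploiting the summability hypothesis \eqref{alpha-finite}. Conditional on the history $\sigma^{-1}(1)=s_1,\dots,\sigma^{-1}(k-1)=s_{k-1}$, the algorithm is Markovian: the next value $\sigma^{-1}(k)$ is the $y_k$-th element of $\mathbb N\setminus\{s_1,\dots,s_{k-1}\}$, where $\nu_k$ is drawn from \eqref{rule-inverse1}--\eqref{rule-inverse2} with $\alpha$ replaced by the modified sequence $\alpha^{(k)}$ determined (deterministically) by the history through the rules (i)--(iii). Since the $y_k$-th element of a set missing $k-1\le n-1$ smaller integers is bounded by $y_k+k-1$, we have $\sigma^{-1}(k)\le y_k+n$, so it is enough to prove that $P(y_k>(C-1)n\mid\text{history})<\varepsilon$ uniformly in $k\le n$ and in the history.

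The main structural input is the following claim, which I would verify by induction on $k$ using the three update rules: there exist a strictly increasing injection $f=f_k\colon\mathbb N\to\mathbb N$ with $f(m)\ge m$ and a nonnegative function $g=g_k\le k-1$ such that
\[
\alpha^{(k)}_m=\alpha_{f(m)}-g(m),\qquad m\ge 1.
\]
In the inductive step, rule (i) preserves $f$ and increments $g$ by $1$; rule (ii) composes $f$ with the shift $m\mapsto m+1$ and increments $g$ by $1$; rule (iii) leaves $f$ and $g$ unchanged for $m<i$ and acts as (ii) for $m\ge i$. The only point requiring attention is that strict monotonicity of $f$ is preserved in rule (iii) at the junction index $i$; this is immediate from $f(i+1)>f(i)$.

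Given this representation, the tail bound is a short computation. From \eqref{rule-inverse2},
\[
P(y_k>L\mid\text{history})\le\sum_{m:\,\alpha^{(k)}_m\ge L}\frac{1}{\alpha^{(k)}_m}.
\]
If $L\ge k-1$, then $g(m)\le k-1\le L\le\alpha^{(k)}_m$, so $\alpha_{f(m)}=\alpha^{(k)}_m+g(m)\le 2\alpha^{(k)}_m$, giving $1/\alpha^{(k)}_m\le 2/\alpha_{f(m)}$. Injectivity of $f$ then yields
\[
P(y_k>L\mid\text{history})\le 2\sum_{j:\,\alpha_j\ge L}\frac{1}{\alpha_j},
\]
and the right-hand side tends to $0$ as $L\to\infty$ by \eqref{alpha-finite}. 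Choose $L_0$ making this quantity smaller than $\varepsilon$, set $C=2$ and $n_0=L_0+1$; for $n\ge n_0$ and $k\le n$ we have $L:=(C-1)n=n\ge L_0\ge k-1$, and \eqref{condpro} follows.

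The principal technical obstacle is the inductive verification of the formula for $\alpha^{(k)}_m$ with the required monotonicity and boundedness properties of $f$ and $g$; once that is in hand, the remainder is a straightforward estimate combining the summability of $1/\alpha_j$ with the elementary comparison $\alpha^{(k)}_m\ge\alpha_{f(m)}/2$ in the regime of interest.
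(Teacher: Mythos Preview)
Your proof is correct and rests on the same structural observation as the paper's: the sequence $\alpha^{(k)}$ governing the $k$th step of the dual algorithm is obtained from a subsequence of $\alpha$ by subtracting amounts bounded by $k-1$. The paper phrases this more informally (``pass to some subsequence and then, perhaps, subtract from each term some integral numbers which are less than $k$''), while you set it up as an explicit induction on the update rules; the content is the same.

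Where you differ is in the tail estimate. The paper keeps the exact product formula $Q=1-\prod_i(1-1/\beta_i)$, bounds it by $1-\prod_{i:\alpha_i>Cn}(1-1/(\alpha_i-n))$, and then controls the product via the inequality $\ln(1+x)\ge 2x$ together with a telescoping bound on $\sum n/(\alpha_i(\alpha_i-n))$, arriving at a bound of the form $\sum_{\alpha_i>Cn}1/\alpha_i+1/C$; this forces $C$ to be taken large. You instead discard the product structure at the outset, using the cruder but sufficient bound $P(y_k>L)\le\sum_{m:\alpha^{(k)}_m\ge L}1/\alpha^{(k)}_m$, followed by the elementary comparison $1/\alpha^{(k)}_m\le 2/\alpha_{f(m)}$ valid once $L\ge k-1$. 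This avoids the logarithmic estimate entirely and yields the explicit constant $C=2$. Your route is slightly more elementary and gives a sharper constant; the paper's route stays closer to the exact distribution but pays for it with a heavier calculation.
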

\begin{proof}
For shorthand, we write $Q$ for the conditional probability in (\ref{condpro}).
 As follows from the description of the dual algorithm in Section  \ref{Section_Statement},
 $$
  {P^{(\alpha,1)}}(\sigma^{-1}(1)>Cn)=1-\prod_{i\,:\alpha_i>Cn}\left(1-\frac1{\alpha_i}\right).
 $$
 More generally, a similar formula holds for $Q$
 with $\alpha_i$ being replaced by  other sequence $\beta_i$.
Following a procedure  in Section \ref{Section_Statement} to derive $(\beta_i)$,
we pass from $(\alpha_i)$ to a subsequence and
 then subtract  from each  term a nonnegative integer not exceeding $k$.
 Therefore,
\begin{equation*}
Q \le   1-\prod_{i:\,\alpha_i>Cn}\left(1-\frac1{\alpha_i-k}\right)  \le
1-\prod_{i:\,\alpha_i>Cn}\left(1-\frac1{\alpha_i-n}\right) .
 \end{equation*}

Since $\ln(1+x)\ge 2x$ for $-1/2\le x\le 0$, we have the following estimate
\begin{multline*}
 -\frac{1}{2}\ln\left(\prod_{i:\,\alpha_i>Cn}\left(1-\frac1{\alpha_i-n}\right)\right) \le
 \sum_{i:\,\alpha_i>Cn}\frac1{\alpha_i-n}\\=  \sum_{i:\,\alpha_i>Cn}\frac1{\alpha_i}+
 \sum_{i:\,\alpha_i>Cn}\frac{n}{\alpha_i(\alpha_i-n)} \le
 \sum_{i:\,\alpha_i>Cn}\frac1{\alpha_i}+
 \sum_{j=Cn}^{\infty}\frac{n}{(j-n-1)(j-n)}\\=  \sum_{i:\,\alpha_i>Cn}\frac1{\alpha_i}+
 \frac{n}{Cn}= \sum_{i:\,\alpha_i>Cn}\frac1{\alpha_i}+
 \frac{1}{C}
\end{multline*}

Now choose small enough $\delta>0$ to have  $1-e^{-\delta}<\varepsilon$. Let $C>\frac{1}{4\delta}$ and
choose $n_0$ such that
$$
 \sum_{i:\,\alpha_i>C{n_0}}\frac1{\alpha_i} <\delta/4
$$
(this is possible, since $\sum 1/\alpha_i$ converges). Then for $n>n_0$ we obtain
$Q<1-e^{-\delta}<\varepsilon$, as desired.
\end{proof}

\begin{proposition}
 \label{prop_large_numbers_unmixed}
Let $O$ be a random order with distribution $P^{(\alpha,1)}$ and let $\sigma_n=\pi^\infty_n(O)$ be
the projection of  $O$ on $\Sym_n$. Then $P^{(\alpha,1)}$-almost surely
$$
  \frac{\sigma^{-1}_n(n)}{n} \to 1.
 $$
\end{proposition}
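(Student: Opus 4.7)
\medskip

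\textit{Proof plan.} The starting point is an identity coming from the dual algorithm. At step $n$, the position $\sigma^{-1}(n)$ is chosen as the $y_n$-th element of $\mathbb{N}\setminus\{\sigma^{-1}(1),\ldots,\sigma^{-1}(n-1)\}$. Writing $T_n:=\#\{i<n:\sigma^{-1}(i)<\sigma^{-1}(n)\}$, one checks that $\sigma^{-1}(n)=y_n+T_n$, while $\sigma_n^{-1}(n)=T_n+1$ (since $\sigma_n^{-1}(n)$ equals the rank of $\sigma^{-1}(n)$ among the positions $\sigma^{-1}(1),\ldots,\sigma^{-1}(n)$). Thus
\[
  \sigma_n^{-1}(n)=\sigma^{-1}(n)-y_n+1.
\]
Setting $M_n:=\max_{k\le n}\sigma^{-1}(k)$, one then has $n-\sigma_n^{-1}(n) = \#\{i<n:\sigma^{-1}(i)>\sigma^{-1}(n)\} \le M_n-\sigma^{-1}(n)$, so it suffices to prove $(M_n-\sigma^{-1}(n))/n\to 0$ almost surely.

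The upper tail of $M_n$ is controlled by Lemma~\ref{lemma_estimate_in_algorithm}: a closer look at its proof shows that for every fixed $\eta>0$,
\[
  P\bigl(\sigma^{-1}(k)>(1+\eta)n\,\big|\,\mathrm{history}\bigr) \;\le\; 2\!\!\sum_{i:\alpha_i>(1+\eta)n}\!\!\frac{1}{\alpha_i-n} \;\longrightarrow\; 0 \quad (n\to\infty)
\]
uniformly in $k\le n$, since $\sum 1/\alpha_i<\infty$ and the telescoping estimate in the Lemma's proof yields the analogous control. A union bound over $k\le n$, applied along a sparse subsequence $n_j=2^j$ (where the probabilities become summable in $j$), combined with Borel--Cantelli and monotonicity of $n\mapsto M_n$, gives $M_n/n\to 1$ almost surely.

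The principal difficulty is the matching statement $\sigma^{-1}(n)/n\to 1$ a.s., equivalent to $y_n/n\to 0$ a.s. I would attack this by analyzing the distribution (\ref{rule-inverse1})--(\ref{rule-inverse2}) for $\nu_n$ and the update rule for $\alpha^{(n)}$ given in Section~\ref{Section_Statement}: the key is to verify that $\sum_k 1/\alpha^{(n)}_k$ remains uniformly summable in $n$, so that $P(\nu_n=0)$ stays bounded away from zero and the tail $P(y_n>\eta n)$ decays fast enough to apply Borel--Cantelli along a sparse subsequence. Combining the two bounds yields $M_n-\sigma^{-1}(n)=o(n)$ almost surely, hence $\sigma_n^{-1}(n)/n\to 1$ a.s., completing the proof.
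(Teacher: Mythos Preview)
Your identity $\sigma_n^{-1}(n)=\sigma^{-1}(n)-y_n+1$ and the bound
$n-\sigma_n^{-1}(n)=\#\{i<n:\sigma^{-1}(i)>\sigma^{-1}(n)\}\le M_n-\sigma^{-1}(n)$
are correct, but the strategy of proving $M_n/n\to 1$ and $\sigma^{-1}(n)/n\to 1$ separately cannot work: in general \emph{neither} limit holds. Take $\alpha_i=i^2$. Then for each $k\le n$ the conditional probability $P(\sigma^{-1}(k)>2n\mid\text{history})$ is of order $\sum_{i:\,i^2>2n}1/(i^2-n)\asymp n^{-1/2}$. Thus the expected number of indices $k\le n$ with $\sigma^{-1}(k)>2n$ is of order $\sqrt{n}\to\infty$, so $P(M_n>2n)\to 1$ and $\limsup M_n/n\ge 2$ almost surely. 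The same tail estimate shows $P(\sigma^{-1}(n)>2n)\asymp n^{-1/2}$, which is not summable, so one should also expect $\limsup \sigma^{-1}(n)/n\ge 2$. In particular, your union bound over $k\le n$ applied along $n_j=2^j$ does \emph{not} give summable probabilities: multiplying the single-step bound by $n$ produces a quantity of order $\sqrt{n}$, not $o(1)$.

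The paper avoids this trap in two ways. First, instead of a union bound it uses Lemma~\ref{lemma_estimate_in_algorithm} to show that the \emph{count} $X=|\{i\le n:\sigma^{-1}(i)\le Cn\}|$ stochastically dominates a sum of i.i.d.\ Bernoulli$(1-\varepsilon)$ variables, and then applies a large-deviations estimate to get $P(X<(1-2\varepsilon)n)\le e^{-cn}$, which \emph{is} summable. Second, it exploits the structural fact that $\{\sigma^{-1}(1),\dots,\sigma^{-1}(n)\}$ is always an initial segment $\{1,\dots,M\}$ together with a subset of $\{\alpha_k+1\}$; since $|\{k:\alpha_k\le Cn\}|=o(n)$, having $(1-2\varepsilon)n$ of the values in $[1,Cn]$ forces $M\ge(1-3\varepsilon)n$, and this immediately gives $\sigma_{n+1}^{-1}(n+1)>(1-3\varepsilon)n$. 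The point is that one never needs to control $M_n$ or $\sigma^{-1}(n)$ individually---what matters is that the \emph{interval part} of the filled positions is long, and that follows from counting rather than from bounding the maximum.
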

\begin{proof}
 Choose $\varepsilon>0$. Recall that a real-valued random variable $X$ stochastically dominates
another such
variable $Y$ if for any bounded non-decreasing function $f$ the expected values satisfy  $Ef(X)\ge Ef(Y)$. Observe that by
Lemma \ref{lemma_estimate_in_algorithm}  the random variable $X=|\{1\le i\le n\mid
\sigma^{-1}(i)\le Cn \}|$ stochastically dominates a sum of $n-n_0$ independent Bernoulli random
variables with the probability of $1$ equal to $1-\varepsilon$
(see Lemma 1.1 in and \cite{LSS}   and Lemma 1 in
\cite{R}). Now using a standard large deviations estimate
for the sum of independent Bernoulli random variables (see e.g. \cite{K}, Chapter 27),
we conclude that there exist constants
$C_1>0$ and $C_2>0$ such that
 \begin{equation}
 \label{eq_x1}
  {P^{(\alpha,1)}}(\left|\{1\le i\le n\mid \sigma^{-1}(i)\le Cn\}\right|>
  (1-2\varepsilon)n) > 1 - \exp(-C_2 n)
 \end{equation}
 for $n>n_1$.


Observe that
the set $\{\sigma^{-1}(1),\dots,\sigma^{-1}(n)\}$ is
the union of an integer interval $\{1,\dots,M\}$ and a subset of the set
$\{\alpha_1+1,\alpha_2+1,\dots\}$.

The convergence of the series $\sum 1/\alpha_i$ implies that
$$
 \frac{\left| \{i\mid \alpha_i \le Cn\}\right|}{n}\to 0.
$$
Therefore, \eqref{eq_x1} implies that
$$
  {P^{(\alpha,1)}}\left(\{1,\dots,\lfloor (1-3\varepsilon )n\rfloor\}\subset \{ \sigma^{-1}(1),\dots,\sigma^{-1}(n)\}\right) > 1-\exp(-C_2 n)
$$
for $n>n_2$. But on the event
$\{1,\dots,\lfloor (1-3\varepsilon) n\rfloor\}\subset \{
\sigma^{-1}(1),\dots,\sigma^{-1}(n)\}$ we have $(\pi^{\infty}_{n+1}(O))^{-1}(n+1)>
(1-3\varepsilon) n$. Hence, for $n>n_2$ we have
\begin{equation}
\label{eq_x2}
 {P^{(\alpha,1)}}\left(\frac{\sigma^{-1}_{n+1}(n+1)}{n}> (1-3\varepsilon)\right)>1-\exp(-C_2 n).
\end{equation}
Since
$
\sum_{n=n_2+1}^{\infty} \exp(-C_2 n)<\infty,
$
from \eqref{eq_x2} and the Borel-Cantelli lemma follows that  for all but finitely many $n$ we have
$$
 \frac{\sigma^{-1}_{n+1}(n+1)}{n}> 1-3\varepsilon,
$$
whence
$$
 \liminf_{n\to\infty} \frac{\sigma^{-1}_{n}(n)}{n}>1-3\varepsilon
$$
almost surely.
To finish the proof it remains to observe that $\varepsilon>0$ is arbitrary and
$\sigma^{-1}_{n}(n)\le n$ always holds.
\end{proof}

\begin{corollary}
\label{corollary_algorithm_gives_a_bijection}
The dual algorithm for $P^{(\alpha,1)}$ eventually fills every position,
so that
the output  is indeed a
bijection $\sigma: \mathbb N\to \mathbb N$.
\end{corollary}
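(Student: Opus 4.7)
The plan is to prove that every positive integer $m$ almost surely lies in the image of $\sigma^{-1}$. Once this is shown for each fixed $m$, a countable intersection over $m\in\mathbb{N}$ upgrades it to: almost surely $\sigma^{-1}:\mathbb{N}\to\mathbb{N}$ is surjective, and since the algorithm outputs distinct values by construction, the resulting map is a bijection.

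The key observation, already recorded inside the proof of Proposition \ref{prop_large_numbers_unmixed}, is that at every stage $n$ the set $\{\sigma^{-1}(1),\dots,\sigma^{-1}(n)\}$ decomposes as an initial block $\{1,\dots,M(n)\}$ together with some subset $B(n)\subset\{\alpha_1+1,\alpha_2+1,\dots\}$. Because the image of $\sigma^{-1}$ only grows with $n$, the quantity $M(n)$ is nondecreasing in $n$. Consequently, on the event $\{M(n)\to\infty\}$ every integer $m$ eventually satisfies $m\le M(n)$ and is therefore contained in $\{\sigma^{-1}(1),\dots,\sigma^{-1}(n)\}$ for all sufficiently large $n$.

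It remains to establish $M(n)\to\infty$ almost surely. This is precisely what the estimate obtained inside the proof of Proposition \ref{prop_large_numbers_unmixed} gives: for each $\varepsilon>0$, the exponential tail bound together with Borel--Cantelli shows that almost surely $\{1,\dots,\lfloor(1-3\varepsilon)n\rfloor\}\subset\{\sigma^{-1}(1),\dots,\sigma^{-1}(n)\}$ for all but finitely many $n$, i.e.\ $M(n)\ge\lfloor(1-3\varepsilon)n\rfloor$ eventually. Taking, say, $\varepsilon=1/4$ already forces $M(n)\to\infty$ a.s. There is no genuine obstacle in this corollary: all the work (the stochastic domination of the counts of small positions by a sum of i.i.d.\ Bernoulli variables via Lemma \ref{lemma_estimate_in_algorithm}, the large-deviation estimate, and the Borel--Cantelli step) was already carried out in the proof of Proposition \ref{prop_large_numbers_unmixed}, and the corollary is essentially a re-reading of that argument.
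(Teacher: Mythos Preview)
Your proof is correct and follows essentially the same approach as the paper: both observe that the needed fact---that for every $k$ the initial block $\{1,\dots,k\}$ is almost surely eventually contained in $\{\sigma^{-1}(1),\dots,\sigma^{-1}(n)\}$---was already established inside the proof of Proposition~\ref{prop_large_numbers_unmixed}. The paper's proof is a one-line citation of that fact, while you spell out the extraction via $M(n)\to\infty$ a bit more explicitly, but the substance is identical.
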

\begin{proof}
 Indeed, in the proof of Proposition \ref{prop_large_numbers_unmixed} we have shown that for every
 $k$ the probability of the event $\{1,\dots,k\}\subset\{\sigma^{-1}(1),\dots,\sigma^{-1}(n)\}$
 tends to $1$ as $n\to\infty$.
\end{proof}

Now we seek for an analogue of Proposition \ref{prop_large_numbers_unmixed} for more general
$P^{\omega}$.

\begin{proposition}
\label{prop_large_numbers_for_zero} Let $O$ be a random order with distribution $P^{*}$ and let
$\sigma_n=\pi^\infty_n(O)$ be the projection of  $O$ on $\Sym_n$. Then $P^{*}$-almost surely
 $$
 \liminf_{n\to\infty} \frac{\sigma_n^{-1}(n)}{n} \to 0.
 $$
\end{proposition}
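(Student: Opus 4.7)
The plan is to exploit the exchangeable construction of $P^*$ mentioned earlier in the text. Let $\xi_1, \xi_2,\ldots$ be i.i.d.\ uniform random variables on $[0,1]$, and define the random order on $\mathbb N$ by declaring $i$ to precede $j$ iff $\xi_i<\xi_j$; the induced distribution on $\mathcal O$ is precisely $P^*$. Under this construction, $\sigma_n^{-1}(i)$ equals the rank of $\xi_i$ among $\xi_1,\dots,\xi_n$ for each $i\le n$. In particular $\sigma_n^{-1}(n)$ is the rank of $\xi_n$ among $\xi_1,\dots,\xi_n$, so whenever $\xi_n=\min(\xi_1,\dots,\xi_n)$ we have $\sigma_n^{-1}(n)=1$ and hence $\sigma_n^{-1}(n)/n=1/n$.

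The next step is to consider the events
$$
E_n:=\{\xi_n<\min(\xi_1,\dots,\xi_{n-1})\},\qquad n\ge 2.
$$
It is a classical fact about continuous i.i.d.\ sequences (lower record indicators) that $E_2, E_3,\dots$ are mutually independent and $P^*(E_n)=1/n$. Since $\sum_{n\ge 2} 1/n=\infty$, the second Borel--Cantelli lemma yields $P^*(E_n\text{ i.o.})=1$.

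On the event $\{E_n \text{ i.o.}\}$, there is an infinite subsequence $n_k\to\infty$ along which $\sigma_{n_k}^{-1}(n_k)/n_k=1/n_k\to 0$, whence $\liminf_{n\to\infty}\sigma_n^{-1}(n)/n=0$ almost surely, as claimed. Since the proof is just a reduction to a standard Borel--Cantelli argument via the exchangeable representation, I do not anticipate any real obstacle; the only thing worth being careful about is the identification $\sigma_n^{-1}(n)=\operatorname{rank}(\xi_n\text{ among }\xi_1,\dots,\xi_n)$, which follows directly from the definition of the projection $\pi_n^\infty$ used to link the exchangeable order on $\mathbb N$ with the permutation on $[n]$.
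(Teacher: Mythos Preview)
Your proof is correct and follows essentially the same route as the paper. Both arguments show that the events $\{\sigma_n^{-1}(n)=1\}$ are independent with probability $1/n$ and then apply the second Borel--Cantelli lemma; the only cosmetic difference is that you justify the independence via the i.i.d.\ $(\xi_i)$ representation (lower-record indicators), whereas the paper simply notes that under $P^*$ the position $\sigma_n^{-1}(n)$ is uniform on $[n]$ and independent of $\sigma_{n-1}$.
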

\begin{proof}
Under $P^*$ the permutation $\sigma_{n-1}$ and the position $\sigma^{-1}_n(n)$ are independent,
and the latter is uniformly distributed on $[n]$. Since $P^*(\sigma_n^{-1}(n)=1)=1/n$, the event
$\{\sigma_n^{-1}(n)=1\}$ almost surely occurs infinitely often as $n\to\infty$, and the statement
becomes trivial.
\end{proof}


The analogous statement for general $P^{(\alpha,p)}$  interpolates between Propositions
\ref{prop_large_numbers_unmixed} and \ref{prop_large_numbers_for_zero}.
\begin{proposition}
 Let $0<p<1$, then for the order $O$ with distribution $P^{(\alpha,p)}$, almost surely
 $$
  \liminf_{n\to\infty} \frac{(\pi^{\infty}_n(O))^{-1}(n)}{n} \to p.
 $$
\end{proposition}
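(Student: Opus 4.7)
\emph{Proof plan.} I would exploit the explicit generative description of $P^{(\alpha,p)}$ given in Section \ref{Section_Statement}. Write $\mathbb{N}=N_1\sqcup N_2$ for the random partition obtained by independent Bernoulli$(p)$ coin flips; on a $P^{(\alpha,p)}$-full measure set both $N_1,N_2$ are infinite and, by the strong law of large numbers, $a_n:=|[n]\cap N_1|$ and $b_n:=|[n]\cap N_2|$ satisfy $a_n/n\to p$ and $b_n/n\to 1-p$ almost surely. Let $\sigma^{(1)}$ (resp.\ $\sigma^{(2)}$) be the $P^{(\alpha,1)}$- (resp.\ $P^*$-) distributed random element of $\Sym$ that, after the canonical increasing identifications $N_i\leftrightarrow\mathbb{N}$, orders $N_1$ (resp.\ $N_2$); conditionally on the partition these two permutations are independent.

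Because $N_1$ entirely precedes $N_2$ in the concatenated order and $n$ is the maximum of $[n]$, the element $n$ corresponds under the identification to the integer $a_n$ inside $N_1$ when $n\in N_1$, and to $b_n$ inside $N_2$ when $n\in N_2$. Writing $\sigma^{(i)}_m:=\pi^\infty_m(\sigma^{(i)})$, this yields the key decomposition
\begin{equation*}
\sigma_n^{-1}(n)=\begin{cases}(\sigma^{(1)}_{a_n})^{-1}(a_n), & n\in N_1,\\ a_n+(\sigma^{(2)}_{b_n})^{-1}(b_n), & n\in N_2,\end{cases}
\end{equation*}
so both summands are precisely the statistics controlled by Propositions \ref{prop_large_numbers_unmixed} and \ref{prop_large_numbers_for_zero}.

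For the lower bound $\liminf\ge p$, I split by regime. If $n\in N_1$, Proposition \ref{prop_large_numbers_unmixed} (applied to $\sigma^{(1)}$ along the subsequence $a_n\to\infty$) gives $(\sigma^{(1)}_{a_n})^{-1}(a_n)/a_n\to 1$ a.s., whence $\sigma_n^{-1}(n)/n\to p$ along this subsequence. If $n\in N_2$, the trivial inequality $\sigma_n^{-1}(n)\ge a_n$ immediately yields $\sigma_n^{-1}(n)/n\ge a_n/n\to p$. Together, $\liminf_n\sigma_n^{-1}(n)/n\ge p$ a.s. For the matching upper bound, I observe that as $n$ ranges over $N_2$ in increasing order, $b_n$ runs through $\mathbb{N}$ bijectively; Proposition \ref{prop_large_numbers_for_zero} then gives $\liminf_{n\in N_2}(\sigma^{(2)}_{b_n})^{-1}(b_n)/b_n=0$ a.s., and multiplying this by $b_n/n\to 1-p$ and adding $a_n/n\to p$ produces $\liminf_{n\in N_2}\sigma_n^{-1}(n)/n\le p$ a.s.

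The main point requiring care, and essentially the only obstacle, is the measurability/conditioning bookkeeping: the Bernoulli coloring is independent of $\sigma^{(1)}$ and $\sigma^{(2)}$, so the a.s.\ conclusions of the two cited propositions transfer to the current mixed setting via Fubini on a full-measure set of partitions. I do not anticipate a conceptual difficulty beyond this, and Remark 2 after Proposition \ref{prop_law_of_large_numbers} (noting that $P^*$ corresponds to the limiting case $p=0$) confirms that the same mechanism produces the degenerate behaviour there.
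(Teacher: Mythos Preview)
Your argument is correct and is essentially the paper's own proof: the same Bernoulli splitting, the same case analysis on $n\in N_1$ versus $n\in N_2$, and the same appeal to Propositions \ref{prop_large_numbers_unmixed} and \ref{prop_large_numbers_for_zero} for the lower and upper bounds respectively. One small slip: $P^*$ is not supported on $\Sym$, so $\sigma^{(2)}$ should be called a random element of $\mathcal{O}$ rather than of $\Sym$; this is harmless since you only ever use the finite projections $\sigma^{(2)}_m$.
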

\begin{proof}
Let $O_0$ and $O_\alpha$ be two independent linear orders on $\mathbb N$, such that the
distributions of $O_0$ and $O_\alpha$ are $P^*$ and $P^{(\alpha,1)}$, respectively. Recall that
the $P^{(\alpha,p)}$-distributed order $O$ is constructed from $O_0$ and $O_\alpha$ by splitting $\mathbb N$ into two subsets $ N_1$ and $N_2$
(with the aid of a coin landing heads up with probability $p$), setting $O_0$
on $ N_2$ and  $O_\alpha$ on $ N_1$,  and requiring that
 $ N_1$ precedes $ N_2$.

Projecting to $[n]$ yields $\sigma=\pi^\infty_n(O)$, constructed as follows. Let $M_1= N_1\cap [n]$
and $M_2= N_2 \cap [n]$. The permutation $\sigma_1=\pi^{\infty}_{|M_1|}(O_\alpha)$
uniquely defines a permutation $\bar \sigma_1$
of the set $M_1$ and $\sigma_2=\pi^{\infty}_{|M_2|}(O_0)$ uniquely defines a
permutation $\bar \sigma_2$
of the set $M_2$. Permutation $\sigma$ is obtained
by first writing $\bar \sigma_1$ and then writing $\bar\sigma_2$.

Let us analyze $\sigma^{-1}(n)$. Choose $\varepsilon> 0$. Almost surely for large enough $n$ we
have
\begin{enumerate}
\item
 $p-\varepsilon\le |M_1|/n\le p+\varepsilon$,
\item
 $1-\varepsilon\le (\sigma_1)^{-1}(|M_1|)/|M_1|\le 1$.
\end{enumerate}
 The latter is just the statement of Proposition \ref{prop_large_numbers_unmixed} and the former
 follows from the law of large numbers for Bernoulli trials. Now if $n\in M_1$, then
$\sigma^{-1}(n)=\sigma_1^{-1}(|M_1|)$ and, thus,
$$
  \frac{\sigma^{-1}(n)}{n}\ge(p-\varepsilon)(1-\varepsilon).
$$
If $n\in M_1$, then
$$
\sigma^{-1}(n)>|M_1|\ge (p-\varepsilon)n.
$$
Since $\varepsilon$ is arbitrary, we conclude that
$$
\liminf_{n\to\infty} \frac{(\pi^\infty_n(O))^{-1}(n)}{n} \ge p.
$$

Next, using Proposition \ref{prop_large_numbers_for_zero} we conclude that almost surely there
exists an increasing sequence $n_m$ such that for $n=n_m$, $m=1,2,\dots$ we have
\begin{enumerate}
 \item
  $n\in M_2$,
 \item
 $\sigma_2^{-1}(|M_2|)=1.$
\end{enumerate}
This implies that for large enough $m$,
$$
\sigma^{-1}(n)=|M_1|+1\le(p+\varepsilon)n+1.
$$
Therefore,
$$
\liminf_{n\to\infty} \frac{(\pi^\infty_n(O))^{-1}(n)}{n} \le (p+\varepsilon)
$$
Since $\varepsilon$ is arbitrary, we are done.
\end{proof}

\begin{proposition}
 If $p>0$, then under $P^{(\alpha,p)}$ the position of the $i$th non-record in
$\pi^{\infty}_k(O)$ converges to $\alpha_i+1$ as $k\to\infty$
almost surely.
\end{proposition}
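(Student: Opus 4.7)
The plan is to exploit the explicit two-block coupling of $P^{(\alpha,p)}$ introduced in Section~\ref{Section_Statement}. Write $O$ as the concatenation of $\mathbb{N}_1$ (ordered by an independent copy $O_\alpha$ of $P^{(\alpha,1)}$) followed by $\mathbb{N}_2$ (ordered by an independent copy $O_0$ of $P^*$), where each integer is placed in $\mathbb{N}_1$ with probability $p$ and in $\mathbb{N}_2$ with probability $1-p$ independently. Fix $k$ and set $M_1 := \mathbb{N}_1 \cap [k]$, $m_1 := |M_1|$, and $M_2 := \mathbb{N}_2 \cap [k]$. Since $m_1$ is a sum of $k$ i.i.d.\ Bernoulli($p$) random variables and $p>0$, the strong law of large numbers gives $m_1/k \to p$ a.s., and in particular $m_1 \to \infty$ almost surely.

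Next I would identify the non-record positions of $\sigma := \pi^\infty_k(O)$ block by block. The combined permutation places $f_1(\sigma_1(j))$ at positions $j \le m_1$, where $\sigma_1 := \pi^\infty_{m_1}(O_\alpha)$ and $f_1 : [m_1] \to M_1$ is the increasing bijection; positions $m_1 < j \le k$ are filled analogously from $O_0$ and $M_2$. Because $f_1$ is increasing, a position $j \le m_1$ is a record of $\sigma$ iff it is a record of $\sigma_1$. From the definition of $P^{(\alpha,1)}$ in terms of ranks, position $j \le m_1$ is a record of $\sigma_1$ iff $r_j = j$, i.e., iff $j \notin \{\alpha_1+1, \alpha_2+1, \dots\}$. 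Hence the non-records of $\sigma$ inside $[m_1]$ are precisely the positions $\alpha_j + 1 \le m_1$, listed in increasing order of $j$.

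The crucial simple observation is that every non-record position of $\sigma$ coming from the second block lies strictly to the right of $m_1$. Consequently, on the event $\{m_1 \ge \alpha_i + 1\}$, the first $i$ non-records of $\sigma$ are exactly $\alpha_1 + 1, \dots, \alpha_i + 1$, and in particular the $i$-th non-record position equals $\alpha_i + 1$ exactly. Since $m_1 \to \infty$ a.s., this event holds for all sufficiently large $k$ almost surely, which yields the stated convergence when $\alpha_i < \infty$. In the case $\alpha_i = +\infty$ (finitely many finite $\alpha_j$'s), the first block ultimately contributes only $i-1$ non-records, so the $i$-th non-record, whenever it exists, must lie in the second block at a position strictly greater than $m_1$; since $m_1 \to \infty$ a.s., this position tends to $+\infty$ as required by the convention in Remark~1.

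There is essentially no probabilistic obstacle beyond the SLLN for $m_1$; the argument is a bookkeeping matter of verifying that the record structure within the first block of the coupled order is unaffected by the increasing relabelling $f_1$, and that second-block positions cannot interleave with the first $i$ first-block non-records once $m_1$ is large enough. The only point worth being careful about is the edge case $\alpha_i = +\infty$, which must be handled separately along the lines indicated above.
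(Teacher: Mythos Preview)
Your argument contains a genuine gap at the step where you assert that ``position $j \le m_1$ is a record of $\sigma_1 = \pi^\infty_{m_1}(O_\alpha)$ iff $r_j = j$, i.e., iff $j \notin \{\alpha_1+1, \alpha_2+1, \dots\}$.'' The ranks $r_j$ in the definition of $P^{(\alpha,1)}$ are the ranks of the \emph{infinite} permutation, and the record structure of the projection $\sigma_1$ is \emph{not} determined by the first $m_1$ of these ranks. (This is exactly the point of the Remark in Section~1: the projections $\pi^n_{n-1}$ do not interact simply with rank coordinates.)

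Here is a concrete failure. Take $\alpha_1 = 2$, $\alpha_2 = 100$ and condition on the positive-probability event $r_3 = 1$, $r_{101} = 1$. The infinite word then begins $3,4,1,5,6,\dots,101,2,\dots$, so for $m_1 = 5$ the projection is $\sigma_1 = 3\,4\,1\,5\,2$. This $\sigma_1$ has non-records at positions $3$ \emph{and} $5$, whereas your claim predicts only position $3$. The value $2$, which sits far out at position $101$ of the infinite word, is pulled into the projection and creates an extra non-record. Thus the event $\{m_1 \ge \alpha_i + 1\}$ does \emph{not} force the first $i$ non-records of $\sigma_1$ to be $\alpha_1+1,\dots,\alpha_i+1$.

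What is actually required is the stronger event that the first $\alpha_i+1$ positions of the infinite word are all occupied by values $\le m_1$; on that event the first $\alpha_i+1$ entries of $\sigma_1$ literally coincide with those of the infinite word, and the identification of non-records follows. Establishing that this event holds eventually is the nontrivial content of Corollary~\ref{corollary_algorithm_gives_a_bijection}, which in turn rests on Lemma~\ref{lemma_estimate_in_algorithm} and Proposition~\ref{prop_large_numbers_unmixed}. So contrary to your closing remark, there \emph{is} a probabilistic obstacle beyond the SLLN for $m_1$: one must show the dual algorithm fills every position, and this is where the paper's proof does the work in the $p=1$ case. Your reduction from general $p>0$ to $p=1$ (the last paragraph) is fine and matches the paper.
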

\begin{proof}
 First, suppose that $p=1$ and
recall the algorithmic description of $P^{(\alpha,1)}$. The permutation
$\pi^{\infty}_k(O)$ is read from the order of numbers $1,\dots,k$ after the first $k$ steps of the
algorithm. Moreover, observe that if after $k$ steps of the algorithm all positions
$1,\dots,\alpha_i+1$ are filled, then $\alpha_i+1$ is precisely the position of the
$i$th non-record in $\pi^{\infty}_k(O)$. Therefore, our claim is implied by  Corollary
\ref{corollary_algorithm_gives_a_bijection}.

For the general $p$ a bulk of integers is appended  at the right end of the permutation, thus not affecting  positions of the first few  non-records.
\end{proof}


\begin{proof}[Proof of Theorem \ref{theorem_extreme_points}]
The set of extremes ${\rm ext}\,{\cal M}_R({\cal O})$ is contained in the Martin boundary by Lemma
\ref{theorem_approximation}. On the other hand, by Proposition \ref{prop_law_of_large_numbers}
each measure $P^\omega, \omega\in\Omega$, satisfies a law of large numbers specific for
this particular $P^\omega$. It follows that the supports of $P^\omega$'s are disjoint, hence none
of the measures
can be represented as a nontrivial convex mixture  over the Martin boundary. Thus every
$P^\omega$ is extreme, so ${\rm ext}\,{\cal M}_R({\cal O})=\{P^\omega,~\omega\in\Omega\}$. The
coincidence of topologies immediately follows from the explicit description of measures $P^\omega$
given in Section \ref{Section_Statement}.
\end{proof}


\section{Two connections}

\label{Section_connections}


\paragraph{Order-invariant measures on causal sets}
We describe now a connection of the record-dependent measures $P^{(\alpha,1)}$
to a recent work on random partial orders \cite{BL1,BL2}.

A partial order $\triangleleft$ on $\mathbb N$ defines a
{\it causal set}  $({\mathbb N},\triangleleft)$ if every element is preceded by finitely many other elements.
A {\it natural extension} of $\triangleleft$ is an order-preserving bijection
$\sigma:{\mathbb N}\to {\mathbb N}$, i.e.  $i\triangleleft j$ implies
$\sigma^{-1}(i)<\sigma^{-1}(j)$. A {\it stem} is a finite collection of positions $j_1,\dots,j_k$ such that there
exists a natural extension with $\sigma^{-1}(1)=j_1,\dots,\sigma^{-1}(k)=j_k$.

If  $j_1,\ldots,j_k$ is a stem, every $D_\ell=\{j_1,\dots,j_\ell\}$, $1\leq \ell\leq k$, is a down-set  (lower ideal).
A stem can be identified with a chain of down-sets $D_1\subset\cdots\subset D_k$, where $|D_\ell|=\ell$.
It is not hard to see that an
infinite chain of down-sets  $D_1\subset D_2\subset\ldots$ (where $|D_\ell|=\ell$) with $\cup D_\ell={\mathbb N}$
uniquely corresponds to a natural extension of $\triangleleft$.

Brightwell and Luczak \cite {BL1,BL2} defined an {\it order-invariant measure}
as a probability measure $P$ on the set of natural extensions of $\triangleleft$, such that
$$P(\sigma^{-1}(1)=j_1,\dots,\sigma^{-1}(k)=j_k)=P(\sigma^{-1}(1)=\ell_1,\dots,\sigma^{-1}(k)=\ell_k),$$
provided $\{j_1,\dots,j_k\}=\{\ell_1,\dots,\ell_k\}$. The condition means that the probability of a stem only depends on
the corresponding down-set $D_k=\{j_1,\dots,j_k\}$.
It is possible to interpret order-invariant measures as  central measures  on the path
space of a graded graph of down-sets.

Let $(\alpha_k)$ be a strictly increasing sequence of integers as in Section 3,
and let $(\beta_k)$ be the (infinite) sequence complimentary to $(\alpha_k+1)$, so that
$\{\alpha_1+1,\alpha_2+1,\ldots\}\cup \{\beta_1,\beta_2,\dots\}={\mathbb N}$.
Consider a partial order $\triangleleft$  generated by the relations
$$\beta_1\triangleleft\beta_2\triangleleft\ldots, \qquad  \alpha_i+1\,\triangleleft\,
\max\{\beta_k: \beta_k\leq \alpha_i\},$$
which mean that $(\beta_k)$ is a chain, and each segment $\beta_k+1,\beta_k+2,\dots,\beta_{k+1}-1$ is an antichain
covered by $\beta_k$.

Obviously from the definitions, $\sigma:{\mathbb N}\to {\mathbb N}$ is a natural extension of $\triangleleft$ if and
only if $\{\beta_k\}$ is the set of records of $\sigma$.

\begin{proposition} $P^{(\alpha,1)}$ is a unique order-invariant measure for the causal set
$({\mathbb N},\triangleleft)$.
\end{proposition}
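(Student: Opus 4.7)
The plan is to verify order-invariance of $P^{(\alpha,1)}$ and then establish uniqueness via the classification in Theorem~\ref{theorem_extreme_points}. For order-invariance, recall that $P^{(\alpha,1)}$-almost surely the permutation $\sigma$ has record set exactly $\{\beta_k\}$; this is equivalent (by the iff noted just before the proposition) to $\sigma^{-1}$ being a natural extension of $\triangleleft$. To check the order-invariance property, I would verify by induction on $k$ that
\[
P^{(\alpha,1)}(\sigma^{-1}(1)=j_1,\dots,\sigma^{-1}(k)=j_k)
\]
depends only on the down-set $\{j_1,\dots,j_k\}$, using the dual algorithm~(\ref{rule-inverse1})--(\ref{rule-inverse2}). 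At each step the algorithm selects $\sigma^{-1}(k+1)$ among the minimal elements of the complement of the current down-set, with probabilities that, when multiplied along a stem, produce the same total for every linear extension of any given down-set.

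For uniqueness, let $P$ be an arbitrary order-invariant measure on natural extensions of $\triangleleft$. The order-invariance condition applied to the first $n$ coordinates of $\sigma^{-1}$ says that, conditionally on the down-set $\{\sigma^{-1}(1),\dots,\sigma^{-1}(n)\}$, the sequence is uniform over valid linear extensions; passing to the induced permutation of $[n]$ via $\pi_n^\infty$ translates this into the statement that $P_n(\sigma)$ depends only on $R(\sigma)$. Hence $P\in\mathcal M_R(\mathcal O)$, and Corollary~\ref{prop_simplex} yields $P=\int_\Omega P^\omega\,d\mu(\omega)$ for a unique $\mu\in\mathcal M(\Omega)$. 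It then suffices to show that $\mu$ must be the point mass at $(\alpha,1)$: among the extremes, $P^*$ puts full mass on dense orders of type $(\mathbb Q,<)$, which are not natural extensions; $P^{(\alpha',p)}$ with $0<p<1$ has a $P^*$-component by construction and so is likewise not supported on natural extensions; and $P^{(\alpha',1)}$ with $\alpha'\neq\alpha$ has record set $\mathbb N\setminus\{\alpha'_k+1\}\neq\{\beta_k\}$, so $\sigma^{-1}$ is not a natural extension of our specific $\triangleleft$. Only $P^{(\alpha,1)}$ survives.

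The main obstacle is the inductive verification of order-invariance, which amounts to a careful combinatorial rearrangement of the telescoping products in (\ref{rule-inverse1})--(\ref{rule-inverse2}) to show that the cumulative weights equalize across all $L(D_k)$ linear extensions of each down-set; everything else in the plan is a reasonably routine invocation of the classification theorem and its Choquet-type corollary.
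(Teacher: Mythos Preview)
Your overall strategy differs from the paper's in both halves. For order-invariance the paper works directly with the rank coordinates $(r_i)$ under $P^{(\alpha,1)}$, showing that the probability of a stem depends only on the associated down-set via independence and uniformity of the $r_i$'s; you instead propose to telescope the products from the dual algorithm. For uniqueness the paper argues that $P^{(\alpha,1)}$ arises as a weak limit (along the subsequence $(\beta_i)$) of the uniform measures on extensions of the finite truncations $([n],\triangleleft)$, invoking a general result of Brightwell--Luczak to conclude. Your route via the Choquet decomposition over $\Omega$ is conceptually nice and would tie the proposition back into the paper's main theorem.

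There is, however, a real gap in your uniqueness argument. You assert that order-invariance ``translates'' into the statement that $P_n(\sigma)$ depends only on $R(\sigma)$, but this is not immediate. Order-invariance gives you that, conditional on the down-set $D=\{\sigma^{-1}(1),\dots,\sigma^{-1}(n)\}$, the stem is uniform over linear extensions of $(D,\triangleleft|_D)$. Passing to $\sigma_n=\pi_n^\infty(\sigma)$, this means $\sigma_n$ is uniform over the linear extensions of the induced poset on $[n]$. But that induced poset is \emph{not} in general of the same shape as a finite truncation of $\triangleleft$: when $D$ contains $\alpha_j+1$'s lying beyond the initial segment, the corresponding elements become isolated in the induced poset. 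Concretely, for $\alpha=(2,5,\dots)$ and $D=\{1,2,3,6\}$, the eight linear extensions yield permutations with two different record sets, $\{1,2\}$ and $\{1,2,4\}$. So conditionally on $D$ the law of $\sigma_n$ is not concentrated on a single record class. It happens in such examples that one still gets a \emph{union of full record classes}, which would suffice for record-dependence after integrating over $D$; but establishing this in general requires a structural lemma about the induced posets (essentially a finite analogue, for each $D$, of the ``natural extension $\Leftrightarrow$ prescribed record set'' equivalence), and you have not supplied one. Until that lemma is proved, the implication ``$P$ order-invariant $\Rightarrow P\in\mathcal M_R(\O)$'' is unjustified, and the Choquet argument cannot get started. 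Your elimination of the other extremes $P^*$, $P^{(\alpha',p)}$ with $p<1$, and $P^{(\alpha',1)}$ with $\alpha'\neq\alpha$ is fine once that gap is closed.
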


\begin{proof}[Sketch of the proof] Every finite down-set with elements arranged in increasing order is a
 sequence $\gamma$ of the kind
$$1,2,\dots,\beta_k,\beta_k+1,\beta_k+2,\dots,\beta_{k+1}-1,\alpha_{i_1}+1,\dots,\alpha_{i_\ell}+1,$$
where either of the segments $\beta_k+1,\beta_k+2,\dots,\beta_{k+1}-1$ or
$\alpha_{i_1},\dots,\alpha_{i_\ell}$ can be empty.  Call such $\gamma$ admissible.

The conditions  ensuring that the set of records is $(\beta_k)$ and that $\gamma$ is admissible
impose constraints on permutation that can be expressed in terms of the  $r_i$. We illustrate this
with $\gamma$ of the form
$$1,\dots,a,b,c$$
where $a+1\in \{b_k\}$ and  $b,c\in \{\alpha_k+1\}$. The constraints on the ranks become
$r_j=j$ for $j\notin \{\alpha_k+1\}$, $r_j<j$ for  $j\in \{\alpha_k+1\}$ and, to guarantee the admissibility,
\begin{eqnarray*}
r_i&\geq& a+1 {\rm ~~for~~} a<i<b,\\
r_b&\leq& a+1,\\
r_i&\geq & a+2 {\rm ~~for~~} b<i<c,\\
r_c&=&a+2,\\
r_i&\geq& a+3 {\rm ~~for~~} i>c.
\end{eqnarray*}

Under $P^{(\alpha, 1)}$ the $r_i$'s are independent and
each $r_i$ is
uniformly distributed on a suitable range.
Therefore each possible
stem associated with $\gamma$  has the same probability, equal to the probability of admissible realization of
  $r_j, j\leq \alpha_{i_\ell}$. The order-invariance of the measure follows.

For  a {\it finite} causal set $([n],\triangleleft)$ the analog of order-invariant measure is the
uniform distribution on the extensions of $\triangleleft$. The uniqueness assertion follows from
the fact that $P^{(\alpha,1)}$ is a weak limit of such measures as $n\to\infty$ along $(\beta_i)$,
and condition (\ref{alpha-finite}) ensures that the limit is a bijection. We omit details,  see
\cite[Section 9]{BL2} for a more general result.
\end{proof}

\paragraph{The Young-Fibonacci lattice}

The \emph{Young-Fibonacci graph} (lattice) was introduced by Stanley \cite{YF_St} and Fomin
\cite{YF_F}. They found out that it shares lots of the features with the Young graph, which is the
object naturally arising in the theory of group representations and combinatorics. In particular, Stanley proved
that both graphs are \emph{differential posets}.

 The vertices of the Young-Fibonacci graph at level $n$ are labeled
 by words  in the alphabet $\{1,2\}$, with the sum of  digits equal $n$.
For instance 1111, 211, 121, 112, 22 are all the words on level $n=4$. The number of vertices on $n$th
level is the $n$th Fibonacci number. Successors of a word are obtained by either inserting a 1 in
any position within the leftmost contiguous block of 2's, or by replacing the leftmost 1 with 2.
For instance, $2212$ has successors $12212, 21212, 22112, 2222$.

Goodman and Kerov \cite{YF1} studied the Martin boundary of the Young-Fibonacci graph.
 Comparing with their result, it is seen that the Martin boundary of the Young-Fibonacci graph
 has the same conical structure as  our $\Omega$. The apex is the {\it Plancherel measure},
 which (like our $P^*$) appears as a pushforward of the uniform
 distribution on permutations. The base is a discrete space comprised of the measures which (like
our $P^{(\alpha,1)}$'s) are parametrized by infinite words in the alphabet $\{1,2\}$ with `rare'
occurrences of 2's, to satisfy  a  condition similar to \eqref{alpha-finite}. The Plancherel
measure of the Young-Fibonacci graph was further studied in \cite{YF2}.

The arguments of \cite{YF1} are very much different from the present paper. Goodman and Kerov
intensively use the relation to a certain non-commutative algebra introduced by Okada \cite{Ok}.
Note also that unlike the Young-Fibonacci graph, the graph of record-sets $\cal R$ is not a
differential poset. Thus, it seems that no direct connection of central measures on  $\cal R$ and the
Young-Fibonacci graph exist. This makes the coincidence of the Martin boundaries
even more intriguing.


{\bf Acknowledgements.} V.G.\, was partially supported by RFBR-CNRS grants 10-01-93114 and
11-01-93105.

\end{document}